\theoremstyle{plain}
\theoremstyle{plain}
\newtheorem{thm}{Theorem}
\newtheorem*{thmA}{Theorem A}
\newtheorem*{thmAsharp}{Theorem A$\sharp$}
\newtheorem*{mainthm}{Main Theorem}
\newtheorem*{propB}{Proposition B}
\newtheorem*{propC}{Proposition C}
\newtheorem*{propD}{Proposition D}
\newtheorem*{thmE}{Theorem E}
\newtheorem*{thmEsharp}{Theorem E$\sharp$}
\theoremstyle{definition}
\newtheorem{defn}[thm]{Definition}
\theoremstyle{plain}
\newtheorem{prop}[thm]{Proposition}
\newtheorem{lem}[thm]{Lemma}
\theoremstyle{remark}
\newtheorem{rem}[thm]{Remark}
\newtheorem{ex}[thm]{Example}
\newcommand{\R}{\mathbb{R}}
\newcommand{\Q}{\mathbb{Q}}
\newcommand{\Z}{\mathbb{Z}}
\newcommand{\N}{\mathbb{N}}
\newcommand{\OO}{\mathcal{O}}
\newcommand{\F}{\mathcal{F}}
\newcommand{\G}{\mathcal{G}}
\newcommand{\M}{\mathcal{M}}
\newcommand{\all}{\mathcal{A}ll}
\newcommand{\stab}{\mathrm{stab}}
\newcommand{\id}{\trianglelefteq}
\newcommand{\spn}{\mathrm{span}}
\newcommand{\FP}{\mathrm{FP}}
\newcommand{\gen}[1]{\langle #1\rangle}
\newcommand{\slot}{\,\underbar{\;\;}\,}
\author{Federico William Pasini}
\title{The ambient classifying space of a classical knot group}
\date{\today}
\begin{document}
\maketitle
\begin{abstract}
For a prime knot group, the classifying space for the family of the subgroups generated by the meridians can be seen as an abstract analogue of the ambient manifold in which the knot lives.
An explicit model of this \textit{ambient} classifying space is constructed as a branched covering space of the $3$-sphere branched over the knot; more general branched covering spaces, obtained quotienting the ambient classifying space by finite-index normal subgroups, are studied.
Various homological properties of said spaces are established, some of which have parallels in algebraic number theory. In particular, prime knot groups are shown to be Bieri-Eckmann, but not Poincar\'e, duality groups for Bredon cohomology with respect to the family of the meridians.
\end{abstract}

\section{Introduction}
Classifying spaces $E_\F G$ of a group $G$ for families $\F$ of subgroups are a generalisation of the total space $EG$ of the universal $G$-principal bundle $EG\to BG$. They have recently attracted much attention, with a special focus on the families $\F=\mathcal{F}in$ of finite subgroups and $\F=\mathcal{V}cyc$ of virtually cyclic subgroups, in connection with Baum-Connes and Farrell-Jones Conjectures respectively (cf.~\cite{luecksurvey}, \cite{misval} and \cite{farjon}).

In knot theory, a different family of the knot group $G_K$ associated to a (tame) knot $K$ is naturally available. It is the \emph{family of meridians}, that is the family $\M$ of the infinite cyclic subgroups generated by a power of a generator of $G_K$ (cf.~\S \ref{sec:construction}). The classifying space associated to $\M$, which we call the \emph{ambient classifying space} of the knot group, turns out to have many significant properties, especially in the case of prime knots.

Finding friendly models of classifying spaces for families is a major problem in geometric group theory, and usually a tough nut to crack.
In our situation, a concrete model of the classifying space of a prime knot group $G_K$ for the family $\M$ can be obtained adapting a technique developed by L\"uck and Weiermann (cf.~\cite{luewei2007}).
\begin{mainthm}
 Let $G_K$ be a prime knot group and $H=\gen{a,l}\cong\Z^2$ a peripheral subgroup generated by a meridian $a$ and a longitude $l$. Then [a model of] the classifying space $E_\M G_K$ of $G_K$ for the family of meridians is given by the pushout of $G_K$-CW-complexes
\begin{equation*}
\xymatrix{
\bigsqcup_{G_K/H}\R^2\ar^\phi[rrr]\ar^{id_{G_K}\times\psi}[d] &&& EG_K\ar[d]\\
\bigsqcup_{G_K/H}Cyl(\pi\colon\R^2\to\R)\ar[rrr]&&&E_\M G_K.
}
\end{equation*}
Here $Cyl(\pi\colon\R^2\to\R)$ denotes the mapping cylinder of the projection from $EH$ to the subcomplex $EH^{\gen a}$ of points fixed by the subgroup $\gen a$. The maps starting from the upper-left corner glue the term $EH\approx\R^2$ in each copy of that mapping cylinder to a boundary component of $EG_K$.
\end{mainthm}

Note that the model of $E_\M G_K$ provided by the Main Theorem includes the chosen model of $EG_K$ as a sub-$G_K$-CW-complex. Also, the natural model of $BG_K$ is a deformation retract of the knot complement $S^3\setminus K$ (cf.~\S \ref{ssec:preliminaries}).

The same construction works for non-prime knots as well. Yet, the maps in the resulting pushout, and hence the model of the classifying space, are less explicit (\S \ref{ssec:non-prime knots}). The technique can also be extended to links (\S \ref{ssec:hopf}). The proof of the Main Theorem and its extensions occupies Section \ref{sec:construction}.

\subsection{Coverings}

The name \emph{ambient classifying space} for the space $E_\M G_K$ is justified by the following direct corollary of the Main Theorem.
\begin{thmA}
Let $K$ be a prime knot. Then $E_\M G_K/G_K\simeq S^3$.
\end{thmA}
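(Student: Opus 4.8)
The plan is to apply the orbit-space functor $(-)/G_K$ to the pushout of the Main Theorem and to recognise the result as the standard decomposition of $S^3$ into the exterior of $K$ and a tubular neighbourhood of $K$. The functor $(-)/G_K$ is left adjoint to the trivial-action functor from spaces to $G_K$-spaces, hence preserves colimits and in particular pushouts. Moreover, the two maps emanating from the upper-left corner are inclusions of $G_K$-CW-subcomplexes, thus equivariant cofibrations, so the square of quotients is again a (homotopy) pushout. Therefore $E_\M G_K/G_K$ is the homotopy pushout of the diagram obtained by quotienting each corner, and it suffices to compute those quotients and the induced maps.

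Next I would compute the three quotients. For the upper-right corner, $EG_K/G_K=BG_K$, which by the asphericity of knot complements is modelled by the compact knot exterior $X_K=S^3\setminus N(K)$, a $3$-manifold with a single torus boundary component $\partial X_K=T^2$ (cf.~\S\ref{ssec:preliminaries}). For the upper-left corner, $\bigsqcup_{G_K/H}\R^2=G_K\times_H EH$, so its quotient is $EH/H=BH=T^2$. For the lower-left corner, $\bigl(\bigsqcup_{G_K/H}Cyl(\pi)\bigr)/G_K=Cyl(\pi)/H$; since $H$ acts freely on the source $\R^2$ of $\pi$ (quotient $T^2$) and through $H/\gen a\cong\Z$ on the target $\R$ (quotient $S^1$), and the mapping cylinder is compatible with passing to quotients, this equals $Cyl(\bar\pi\colon T^2\to S^1)$, where $\bar\pi$ collapses the meridian circle. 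Viewing $Cyl(\bar\pi)$ as a bundle over the longitude factor $S^1$ whose fibre is the cone on the meridian circle — that is, a $2$-disc — identifies $Cyl(\pi)/H$ with the solid torus $D^2\times S^1$, i.e.\ with a copy of the tubular neighbourhood $N(K)$, inside which the meridian $a$ bounds the disc $D^2\times\{*\}$.

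Finally I would identify the maps. Under the quotient, $\phi$ becomes the inclusion of the boundary torus $\partial X_K\hookrightarrow X_K$, while $id_{G_K}\times\psi$ becomes the boundary inclusion $T^2=\partial N(K)\hookrightarrow N(K)$. The homotopy pushout of these two boundary inclusions of the common torus is exactly the regluing $X_K\cup_{T^2}N(K)$, which is $S^3$. The step requiring the most care is the lower-left computation: one must verify not merely that $Cyl(\pi)/H$ is a solid torus up to homotopy, but that the identification matches the meridian $a$ (the collapsed circle) with the curve bounding the meridian disc of $N(K)$, so that the regluing recovers $S^3$ rather than some other Dehn filling. Because $BG_K$ is only homotopy equivalent to the smooth exterior $X_K$, the conclusion is the homotopy equivalence $E_\M G_K/G_K\simeq S^3$ asserted in the statement, rather than a homeomorphism.
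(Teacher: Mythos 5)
Your proposal is correct and is essentially the paper's own argument: the paper treats Theorem A as a direct corollary of the Main Theorem, the implicit proof being exactly your computation that quotienting the pushout turns $EG_K$ into the knot exterior $X_K$ and each $Cyl(\pi\colon\R^2\to\R)$ into a solid torus glued along its boundary torus with the meridian $a$ bounding the disc, recovering the standard decomposition $S^3=X_K\cup_{T^2}N(K)$. The same computation appears in the paper in \S\ref{ssec:propB} for an arbitrary finite-index normal subgroup $U$ (gluing solid tori $Cyl(\pi)/U$ to the boundary components of $EG_K/U$), of which your case is the specialisation $U=G_K$, including your key observation that the filling is along the meridian.
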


Any normal subgroup of finite index $U\id G_K$ determines a unique finite regular covering $\pi^U\colon EG_K/U\to BG_K$, characterised by $\pi_1(EG_K/U, x)\cong U$ (cf.~\cite[\S 1.3]{hatcher}). The Main Theorem shows that $U$ also determines an associated finite \emph{branched} covering $\widehat\pi^U\colon E_\M G_K/U\to S^3$, branched over the knot.

The subgroup $U$ has a natural family of subgroups obtained by restricting $\M$:
\begin{equation}\label{eq:M_U}
\M_U=\{H\cap U\mid H\in\M\}=\{H\leq U\mid H\in\M\}.
\end{equation}

Theorem A extends to normal subgroups of finite index, with respect to their restricted family:
\begin{thmAsharp}
 Let $K$ be a prime knot, $G_K$ its knot group and $U\id G_K$ a normal subgroup of finite index. Then the lift $(\pi^U)^{-1}(K)$ is a link living in the $3$-manifold $E_{\M_U} U/U=E_\M G_K/U$.
\end{thmAsharp}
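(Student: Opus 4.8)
The plan is to reduce Theorem A$\sharp$ to the already-established Theorem A by carefully comparing the two families $\M$ on $G_K$ and $\M_U$ on $U$, and by exploiting the functoriality of the classifying-space construction under passage to a finite-index subgroup. First I would establish the key identity $E_{\M_U}U = E_\M G_K$ as $U$-CW-complexes (the equality asserted in the statement). The conceptual point is that restricting a model of $E_\F G$ along the inclusion $U\hookrightarrow G_K$ yields a model of $E_{\res_U \F}U$, where $\res_U\F$ is the family $\{H\cap U\mid H\in\F\}$; since a $G_K$-CW-complex $X$ has isotropy in $\F$ if and only if, regarded as a $U$-CW-complex via restriction, it has isotropy in $\M_U$, and since the fixed-point condition characterising $E_\F G$ (namely $X^L$ contractible for $L\in\F$, empty otherwise) is inherited, the restricted complex is a model for $E_{\M_U}U$. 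I would verify that $\M_U$ as computed in~\eqref{eq:M_U} is genuinely a family of subgroups of $U$ (closed under conjugation in $U$ and under taking subgroups), so that the classifying space $E_{\M_U}U$ makes sense, and that $\res_U\M=\M_U$.

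Next I would identify the quotient $3$-manifold. Because $U\id G_K$ has finite index, the $G_K$-action on $E_\M G_K$ restricts to a properly discontinuous free-away-from-$\M$ action of $U$, and the quotient $E_\M G_K/U$ is a finite regular covering of $E_\M G_K/G_K$ with deck group $G_K/U$. By Theorem A, $E_\M G_K/G_K\simeq S^3$, so $E_\M G_K/U$ is a finite branched covering of $S^3$; using the Main Theorem's explicit pushout model one sees concretely that the branch locus downstairs is the knot $K$ and that the total space is the closed orientable $3$-manifold $E_{\M_U}U/U=E_\M G_K/U$. Combining with the first step gives the stated equality of manifolds. It remains to check that the preimage $(\pi^U)^{-1}(K)$, a priori defined via the unbranched covering $\pi^U\colon EG_K/U\to BG_K\simeq S^3\setminus K$ of the knot complement, is the right object: the components of this preimage are circles because each meridian generates a subgroup in $\M$, and the corresponding peripheral tori of $EG_K/U$ assemble, under the branched filling prescribed by the mapping-cylinder terms of the Main Theorem, into the branch locus of $\widehat\pi^U$. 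Thus $(\pi^U)^{-1}(K)$ is precisely a (possibly multi-component) link in $E_\M G_K/U$.

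The main obstacle I anticipate is bookkeeping the relationship between the coset spaces indexing the peripheral pieces and the number and type of link components upstairs. Concretely, the index set $G_K/H$ in the Main Theorem becomes, after quotienting by $U$, a set of $U$-orbits governed by the double cosets $U\backslash G_K/H$, and each such double coset contributes one peripheral torus to $EG_K/U$ whose core circle is a component of the lifted link; the number of components and the branching data are controlled by how $U$ meets the peripheral subgroup $H$, i.e.\ by the intersections $H\cap gUg^{-1}$ appearing implicitly in~\eqref{eq:M_U}. Verifying that these pieces glue to a genuine link (embedded, disjoint circles) rather than some more singular locus, and that the restricted family $\M_U$ sees exactly these circles as its branch set, is the technically delicate part; the rest follows formally from Theorem A and the Main Theorem by restriction to $U$.
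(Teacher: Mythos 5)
Your overall strategy coincides with the paper's: the equality $E_{\M_U}U=E_\M G_K$ is indeed the restriction principle for classifying spaces (the isotropy groups of the $U$-action on $E_\M G_K$ are exactly the subgroups in $\M_U$, a fact the paper itself invokes in the proof of Theorem E$\sharp$), and the geometric content is obtained by quotienting the pushout model of the Main Theorem by $U$. Your double-coset bookkeeping is also correct: since $U$ is normal, the $U$-orbits of boundary planes and of mapping-cylinder pieces are indexed by $G_K/UH$.

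However, the step you set aside as ``the technically delicate part'' is not a routine verification that follows from what precedes it --- it is the actual proof of the theorem, and you give no idea for carrying it out. Concretely, one must show that each mapping cylinder $Cyl(\pi\colon\R^2\to\R)$, modulo its setwise stabiliser $U\cap gHg^{-1}\cong\Z^2$ in $U$, is a solid torus whose core (the image of the axis $EH^{\gen{a}}$, i.e.\ of the fixed locus of the meridian) is an embedded circle; only then is $E_\M G_K/U$ a closed $3$-manifold and the image of the fixed locus a link rather than a possibly singular set. The paper does exactly this in \S\ref{ssec:propB}: letting $e$ be minimal with $a^e\in U$, it uses the gcd/determinant (Euclid) argument to produce a $\Z$-basis $\{a^e,u\}$ of the lattice $U\cap H$; quotienting the mapping cylinder first by $\gen{a^e}$ gives an infinite solid cylinder with axis $EH^{\gen{a}}$, and quotienting then by $\gen{u}$ (a screw motion) gives a solid torus with core the image of that axis. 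Gluing these $r=|G_K\colon UH|$ solid tori to the boundary tori of $EG_K/U$ exhibits $E_\M G_K/U$ as a closed $3$-manifold in which the lift of $K$ is the disjoint union of the core circles, i.e.\ a link. Without this lattice analysis (or an equivalent argument, e.g.\ showing the quotient of the mapping cylinder fibres as an orientable $D^2$-bundle over $S^1$), the assertion that the branch locus upstairs is an embedded link --- the whole point of Theorem A$\sharp$ --- remains unproven.
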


Moreover, the unbranched and branched covering spaces associated to $U$ are related to each other at the level of their fundamental groups:
\begin{propB}
Let $G_K$ be a prime knot group and $U\id G_K$ a normal subgroup of finite index. There is a short exact sequence of groups
\begin{equation}\label{eq:non ab}
\xymatrix{
1\ar[r]&M_U\ar[r]&U\ar[r]&\pi_1(E_\M G_K/U)\ar[r]&1,
}
\end{equation}
where $M_U$ is the normal subgroup of $U$ generated by those powers of the meridians of the knot that lie in $U$. In particular, applying the abelianisation functor one gets the exact sequence
\begin{equation}\label{eq:ab}
\xymatrix{
 M_U^{ab}\ar[r]&U^{ab}\ar^-{\pi}[r]&H_1(E_\M G_K/U)\ar[r]&0.
 }
\end{equation}
\end{propB}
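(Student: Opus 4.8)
The plan is to read off $\pi_1(E_\M G_K/U)$ directly from the pushout model of the Main Theorem. I prefer this to invoking an Armstrong-type description of the orbit space, because the $U$-action on the contractible complex $E_\M G_K$ is not proper (its point stabilisers are the infinite cyclic groups in $\M_U$), so the usual hypotheses are unavailable. Since taking orbit spaces is a colimit, it commutes with the pushout; restricting the $G_K$-action to $U$ and applying $(-)/U$ to the square of the Main Theorem therefore exhibits $E_\M G_K/U$ as the pushout
$$
\xymatrix{
\bigl(\bigsqcup_{G_K/H}\R^2\bigr)/U \ar[r]\ar[d] & EG_K/U\ar[d]\\
\bigl(\bigsqcup_{G_K/H}Cyl(\pi)\bigr)/U \ar[r] & E_\M G_K/U.
}
$$
The upper-right corner is the regular $U$-cover of $BG_K\simeq S^3\setminus K$, an aspherical space with $\pi_1(EG_K/U)\cong U$; the strategy is to build up $\pi_1$ of the whole pushout from this group by Seifert--van Kampen.

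Next I would unravel the two left-hand corners. The $U$-orbits on $G_K/H$ are indexed by the double cosets $U\backslash G_K/H$, and the $U$-stabiliser of the coset $gH$ is $U\cap{}^gH$, so
$$
\Bigl(\bigsqcup_{G_K/H}\R^2\Bigr)\big/U \;\cong\; \bigsqcup_{g\in U\backslash G_K/H}\R^2/(U\cap{}^gH).
$$
As $U$ has finite index, each $U\cap{}^gH$ is a finite-index, hence rank-$2$, subgroup of ${}^gH\cong\Z^2$, so every summand is a torus $T^2_g$ with $\pi_1(T^2_g)\cong U\cap{}^gH$. The matching summand of the lower-left corner is the mapping cylinder of the induced projection $T^2_g\to B_g$, where $B_g$ is the $U\cap{}^gH$-quotient of $EH^{\gen{a}}\approx\R$; since the meridian acts trivially on $EH^{\gen{a}}$ and the longitude by translation, $B_g\approx S^1$ with $\pi_1(B_g)\cong(U\cap{}^gH)/(U\cap{}^g\gen{a})$. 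Thus the inclusion of $T^2_g$ into its cylinder realises on $\pi_1$ exactly the quotient that kills the meridian subgroup $U\cap{}^g\gen{a}$; geometrically this is the Dehn filling capping off each lifted boundary torus along its meridian, consistent with Theorem~A$\sharp$.

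I would then assemble the pushout by Seifert--van Kampen (each gluing torus is connected, and the top map embeds the $T^2_g$ as the boundary components of the manifold $EG_K/U$). Gluing, along $T^2_g$, a space homotopy equivalent to $B_g\approx S^1$ whose attaching map on $\pi_1$ is the projection $U\cap{}^gH\twoheadrightarrow(U\cap{}^gH)/(U\cap{}^g\gen{a})$ amounts, on fundamental groups, to the amalgam $U *_{U\cap{}^gH}\bigl((U\cap{}^gH)/(U\cap{}^g\gen{a})\bigr)$, which is just the quotient of $U$ by the normal closure of the image of $U\cap{}^g\gen{a}$. Carrying this out for every double-coset representative, and using $U\trianglelefteq G_K$ so that conjugating ${}^g\gen{a}$ by elements of $U$ sweeps out all conjugate meridians, the cumulative effect is to quotient $U$ by the normal subgroup it generates by all powers of meridians lying in $U$, i.e.\ by $M_U$. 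This gives $\pi_1(E_\M G_K/U)\cong U/M_U$ and hence the short exact sequence \eqref{eq:non ab}.

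Finally, the abelianised sequence \eqref{eq:ab} is formal: abelianisation is right exact and $H_1(E_\M G_K/U)=\pi_1(E_\M G_K/U)^{ab}=(U/M_U)^{ab}$, so applying $(-)^{ab}$ to \eqref{eq:non ab} yields exactness of $M_U^{ab}\to U^{ab}\to H_1(E_\M G_K/U)\to 0$, with no injectivity on the left (the kernel of $M_U^{ab}\to U^{ab}$ is $(M_U\cap[U,U])/[M_U,M_U]$, in general nonzero). The step I expect to require the most care is the bookkeeping in the previous paragraph: verifying that the union over double-coset representatives, followed by the normal closure in $U$, reproduces precisely $M_U$, and checking that the Seifert--van Kampen hypotheses (connectedness of the glued tori and cofibrancy of the inclusions coming from the $G_K$-CW structure) hold so that the amalgam/quotient description is valid.
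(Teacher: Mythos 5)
Your proposal is correct and follows essentially the same route as the paper: quotient the pushout model by $U$, observe that each mapping cylinder descends to a solid torus capping a boundary torus of $EG_K/U$ (a Dehn filling along the meridian), and apply Seifert--van Kampen to identify $\pi_1(E_\M G_K/U)$ with the quotient of $U$ by the normal closure of the meridian powers lying in $U$, your double-coset bookkeeping matching the paper's statement that the glued meridians represent the $U$-conjugacy classes of such powers. The only cosmetic difference is that the paper exhibits each solid torus by choosing an explicit $\Z$-basis $\{a^e,u\}$ of $U\cap H$ and quotienting in two steps, whereas you identify the quotient directly as the mapping cylinder of $T^2_g\to S^1$; both yield the same gluing data and the same presentation of $\pi_1$.
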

\begin{rem}
This has a nice parallel in algebraic number theory. 
Let $F$ be an algebraic number field, $\mathcal O_F$ its ring of algebraic integers and $G_F$ its absolute Galois group, that is the Galois group of its separable closure (cf.~\cite[\S IV.1]{neukirch}).
A direct consequence of Hilbert class field theory is that the ideal class group $Cl_F$ (cf.~\cite[\S I.3]{neukirch}) fits into an exact sequence
\[
 \xymatrix{
 \coprod_{p\in \mathrm{Spec(\OO_F)}}\OO_p^*\ar[r]&G_F^{ab}\ar^-{\varpi}[r]&Cl_F\ar[r]&0,
 }
\]
where $\OO_p$ is the complete discrete valuation ring associated with the prime $p\id\OO_F$ and $G_F^{ab}$ is the Galois group of the maximal abelian extension of $F$.
Even if an explicit description has not been obtained yet, it is known that $\ker(\varpi)$ is generated by the \emph{ramification groups} (cf.~\cite[\S II.9]{neukirch}) of the primes of $\OO_F$. So, passing from the maximal abelian extension of $F$ to the extension of $F$ with Galois group $Cl_F$ results in killing algebraic ramification.

On the other hand, a way of reading the sequences \eqref{eq:non ab} and \eqref{eq:ab} is to say that the ``meridians'' of $U$, i.e.~the generators of $M_U$, are responsible for the ramification in the branched covering space $E_\M U$. Then,
passing from an unbranched covering space to the associated branched one results in killing topological ramification.  
This analogy between knot theory and algebraic number theory is in the spirit of \cite{morishita}.
\end{rem}

Proposition B is a key step in proving various statements about branched coverings of $S^3$ branched over a knot (cf.~\S \ref{sec:consequences}).
The significance of this topic is well expressed, for example, by J. W. Alexander's theorem that every compact orientable $3$-manifold without boundary is a branched covering space of $S^3$ branched over a link (cf.~\cite{alexander}). More recently, H. Hilden, M. T. Lozano and J. M. Montesinos improved the result, showing that
every compact orientable $3$-manifold without boundary is a branched covering space of $S^3$ branched over the \emph{figure-eight knot} (cf.~\cite{hilomo}).

\subsection{Poitou-Tate duality}
Poincar\'e-Lefschetz duality yields the following.

\begin{propC}
Let $G_K$ be a nontrivial knot group, $H\leq G_K$ a peripheral subgroup and $U\id G_K$ a normal subgroup of finite index. Then there is an exact sequence of groups
\begin{equation}\label{eq:poitou-tate U}
\xymatrix@R-10pt{
0\ar[r] & H^0(U,\Z)\ar[r] & \coprod\limits_{G_K/U\! H} H_2(H\cap U,\Z)\ar[r] & H_2(U,\Z)\ar[lld] & \\
 & H^1(U,\Z)\ar[r] & \coprod\limits_{G_K/U\! H} H_1(H\cap U,\Z)\ar[r] & H_1(U,\Z)\ar[lld] & \\
 & H^2(U,\Z)\ar[r] & \coprod\limits_{G_K/U\! H} H_0(H\cap U,\Z)\ar[r] & H_0(U,\Z)\ar[r] & 0.
}
\end{equation}
\end{propC}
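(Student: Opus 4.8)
The plan is to realise $BU$ as a finite-sheeted cover of the knot complement and then feed the long exact sequence of a pair into Poincar\'e--Lefschetz duality. As recalled after the Main Theorem, for a nontrivial knot the complement $X=S^3\setminus\nu(K)$ is a compact, orientable, aspherical $3$-manifold whose single boundary component is the peripheral torus $\partial X=T^2$ with $\pi_1(\partial X)=H$; thus $X$ is a model for $BG_K$. The subgroup $U\id G_K$ then determines a finite cover $p\colon M\to X$ with $\pi_1(M)=U$. Since $[G_K:U]<\infty$, the total space $M$ is again a compact orientable aspherical $3$-manifold, hence a model for $BU$, and it inherits an orientation from $S^3$.

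The crux is to describe $\partial M=p^{-1}(\partial X)$. The connected components of the preimage of the connected subspace $\partial X$ are indexed by the double cosets $U\backslash G_K/H$, which collapse to the single cosets $G_K/U\!H$ because $U$ is normal (so that $U\!H=HU$ is a subgroup). Moreover, normality forces the stabiliser of each sheet to be exactly $H\cap U$, so every component is a torus modelling $B(H\cap U)$. Therefore $\partial M\simeq\bigsqcup_{G_K/U\!H}B(H\cap U)$, and consequently $H_\ast(\partial M,\Z)\cong\coprod_{G_K/U\!H}H_\ast(H\cap U,\Z)$.

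It then remains to assemble the long exact sequence of the pair $(M,\partial M)$ in homology and to apply Lefschetz duality $H_k(M,\partial M;\Z)\cong H^{3-k}(M;\Z)=H^{3-k}(U,\Z)$. The vanishing inputs are $H_3(\partial M)=0$ and $H_3(M)=0$ (the boundary is nonempty), together with $H_0(M,\partial M)=0$ (the cover $M$ is connected); combined with the group-homology identification of $\partial M$ above and with $\mathrm{cd}(G_K)=2$ truncating the high end, these substitutions turn the exact sequence of the pair verbatim into the asserted sequence \eqref{eq:poitou-tate U}.

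The main obstacle I anticipate is precisely the boundary bookkeeping: correctly identifying the index set as $G_K/U\!H$ and each boundary component's fundamental group as $H\cap U$, both of which rest essentially on the normality of $U$. Once that is in place, the remainder is a formal consequence of the naturality of Poincar\'e--Lefschetz duality under the inclusion $\partial M\hookrightarrow M$; in particular the extreme maps of \eqref{eq:poitou-tate U} acquire a geometric reading, the leftmost (injective because $H_3(M)=0$) as the sum of fundamental classes of the boundary tori, and the rightmost (surjective because $M$ is connected) as the augmentation onto the connected total space.
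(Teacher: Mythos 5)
Your proof is correct and takes essentially the same route as the paper: realise $BU$ as the finite cover $X_U=EG_K/U$ of the closed knot complement, identify its boundary as a disjoint union of tori indexed by $G_K/U\!H$ with fundamental groups $H\cap U$, and convert the long exact homology sequence of the pair $(X_U,\partial X_U)$ into \eqref{eq:poitou-tate U} via Poincar\'e--Lefschetz duality and asphericity. Your double-coset bookkeeping and the explicit vanishing inputs $H_3(X_U)=0$ and $H_0(X_U,\partial X_U)=0$ just spell out details the paper leaves implicit.
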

Note that $H\cap U$ is isomorphic to $\Z^2$, as $U$ has finite index in $G_K$. 
\begin{rem}
This has an analogue in algebraic number theory as well.
In few words, there is a duality principle, named after Poitou and Tate, for Galois cohomology of algebraic number fields. If $S$ is a nonempty set of primes of an algebraic number field $F$, including the primes at $\infty$, and $A$ is a finite $G_S$-module s.t. $\nu(|A|)=0$ for all $\nu\notin S$, Poitou-Tate duality produces the $9$-terms exact sequence
\begin{equation}\label{eq:poitou-tate F}
\xymatrix@R-10pt{
0\ar[r] & H^0(F_S,A)\ar[r] & P^0(F_S,A)\ar[r] & H^2(F_S,A')^\vee\ar[dll]&\\
& H^1(F_S,A)\ar_{loc_1}[r] & P^1(F_S,A)\ar[r] & H^1(F_S,A')^\vee\ar[dll]&\\
& H^2(F_S,A)\ar[r] & P^2(F_S,A)\ar[r] & H^0(F_S,A')^\vee\ar[r]&0.
}
\end{equation}
This connects the Galois cohomology of the maximal extension $F_S/F$ unramified outside $S$ with the Galois cohomology of its completions with respect to a set of non-archimedean distances induced by the prime ideals of the ring of algebraic integers of $F_S$ (more details can be found in \cite[\S VIII.6]{NSW} or \cite[\S II.6]{serre}).


\end{rem}

The kernel of the \emph{localisation map} $loc_1$ in Poitou-Tate sequence \eqref{eq:poitou-tate F} is the \emph{first Tate-\v{S}afarevi\v{c} group} of $F_S$ (cf.~\cite[\S VIII.6]{NSW}). Under certain circumstances, the first homology and cohomology groups of $E_\M G_K/U$ play an analogue role in the sequence \eqref{eq:poitou-tate U}.
\begin{propD}
Let $G_K$ be a prime knot group, $H\leq G_K$ a peripheral subgroup and $U\id G_K$ a normal subgroup of finite index. If the link $(\widehat\pi^U)^{-1}(K)$ in the $3$-manifold $E_\M G_K/U$ is a knot and its longitude is null-homologous in its complement $E_\M G_K/U\setminus(\widehat\pi^U)^{-1}(K)$, then there is an exact sequence of groups
\[
\xymatrix@R-10pt@C-8pt{
0\ar[r] & H^1(E_\M G_K/U,\Z)\ar[d]\\
 & H^1(U,\Z)\ar[r] & \coprod\limits_{G_K/U\! H} H_1(H\cap U,\Z)\ar[r] & H_1(U,\Z)\ar[d] & \\
 &&&H_1(E_\M G_K/U,\Z)\ar[r] & 0.
}
\]
\end{propD}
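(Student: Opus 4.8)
The plan is to use the Poincar\'e--Lefschetz sequence of Proposition C as the backbone and to identify its Tate--\v{S}afarevi\v{c}-type kernel and cokernel with the (co)homology of the branched cover $N:=E_\M G_K/U$. Write $M:=EG_K/U=BU$, a compact orientable aspherical $3$-manifold whose boundary is $\partial M=\coprod_{G_K/UH}T_i$, a disjoint union of tori with $\pi_1(T_i)\cong H\cap U$. By the Main Theorem and Theorem A$\sharp$, the local model $Cyl(\pi\colon\R^2\to\R)$ becomes, after quotienting by $U$, a solid torus glued to $T_i$ along the coned-off meridian; hence $N$ is the closed orientable $3$-manifold obtained from $M$ by Dehn-filling each $T_i$ with a solid torus $V_i$ along its meridian $\mu_i$, and $M=N\setminus(\widehat\pi^U)^{-1}(K)$ with $\{\mu_i,\lambda_i\}$ the meridian--longitude basis of $H_1(T_i)\cong H\cap U$.

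First I would extract from Proposition C, by exactness alone, that the kernel of $loc_1\colon H^1(U)\to\coprod H_1(H\cap U)$ is the cokernel of the inclusion-induced map $\beta\colon\coprod H_2(H\cap U)\to H_2(U)$, and that the cokernel of $\gamma\colon\coprod H_1(H\cap U)\to H_1(U)$ is the cokernel of the same inclusion-induced map in degree $1$. So it suffices to compute these two (co)kernels geometrically. For that I would feed in the long exact sequence of the pair $(N,M)$: by excision $H_*(N,M)\cong\coprod_i H_*(V_i,\partial V_i)$, where $H_*(D^2\times S^1,T^2)$ is $\Z$ in degrees $2,3$ and $0$ otherwise. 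The connecting map $H_3(N,M)\to H_2(M)$ is the composite of the isomorphism onto $H_2(\partial M)$ with $\beta$, so its image is $\mathrm{im}(\beta)$; and $\partial'\colon H_2(N,M)\to H_1(M)$ sends the $i$-th generator to the meridian class $[\mu_i]$. Reading off the sequence then gives $H_1(N)=\mathrm{coker}(\partial')$ together with a short exact sequence $0\to\mathrm{coker}(\beta)\to H_2(N)\to\ker(\partial')\to 0$.

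Now the hypotheses enter. Since $(\widehat\pi^U)^{-1}(K)$ is a knot there is a single filling torus, so $\partial'\colon\Z\to H_1(M)$ is $1\mapsto[\mu]$ and $\mathrm{im}(\gamma)=\gen{[\mu],[\lambda]}$. The longitude being null-homologous means $[\lambda]=0$, whence $\mathrm{im}(\gamma)=\gen{[\mu]}=\mathrm{im}(\partial')$ and therefore $\mathrm{coker}(\gamma)=\mathrm{coker}(\partial')=H_1(N)$, which yields the right-hand end of the asserted sequence. For the left-hand end I would invoke the ``half lives, half dies'' lemma: $\ker(H_1(\partial M;\Q)\to H_1(M;\Q))$ is $1$-dimensional, and since it contains the primitive class $\lambda$ it equals $\Q\lambda$, so $[\mu]$ has infinite order in $H_1(M)$ and $\partial'$ is injective. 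Hence $H_2(N)\cong\mathrm{coker}(\beta)=\ker(loc_1)$, and Poincar\'e duality on the closed orientable manifold $N$ gives $H^1(N)\cong H_2(N)\cong\ker(loc_1)$. Splicing $0\to H^1(N)\xrightarrow{\sim}\ker(loc_1)\hookrightarrow H^1(U)$ and $H_1(U)\twoheadrightarrow\mathrm{coker}(\gamma)=H_1(N)\to 0$ into the exact middle $H^1(U)\xrightarrow{loc_1}\coprod H_1(H\cap U)\xrightarrow{\gamma}H_1(U)$ of Proposition C produces the claimed five-term exact sequence.

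The hard part will be the injectivity of $\partial'$, equivalently the statement that the meridian of the branched knot has infinite order in $H_1(M)$: this is precisely where the longitude hypothesis is indispensable, and it must be drawn from the Lagrangian ``half lives, half dies'' property of the peripheral torus rather than from any direct computation. A secondary technical point is the careful identification of the connecting maps of the pair $(N,M)$ with the geometric meridian and boundary classes, together with the verification that the map $\beta$ appearing in Proposition C is genuinely inclusion-induced, so that its cokernel legitimately matches $\ker(loc_1)$.
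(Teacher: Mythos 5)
Your proof is correct, but it takes a genuinely different route from the paper's. Writing $M=EG_K/U$ and $N=E_\M G_K/U$ as you do: the paper never analyses the pair $(N,M)$. Instead it starts from Proposition B, abelianises the sequence $1\to M_U\to U\to\pi_1(N)\to 1$ to get $\gen{m}\to H_1(U)\to H_1(N)\to 0$, extends the first map to $\widetilde\iota\colon H_1(H\cap U)\to H_1(U)$ by declaring $\widetilde\iota(l)=0$, and obtains the cohomological end by $\hom$-dualising this sequence via the Universal Coefficient Theorem; the splicing onto the middle of Proposition C is then done by checking two commutative squares, where the null-homologous-longitude hypothesis forces $\widetilde\iota$ to coincide with the inclusion-induced map $\alpha$, and the compatibility of the UCT identification with Poincar\'e--Lefschetz duality (the diagram inside Hatcher's proof of Theorem 3.43) gives the lower square. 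You bypass Proposition B entirely and argue with the Dehn-filling topology: the long exact sequence of $(N,M)$ plus excision computes $H_1(N)=\mathrm{coker}(\partial')$ and places $H_2(N)$ in an extension $0\to\mathrm{coker}(\beta)\to H_2(N)\to\ker(\partial')\to 0$; the two hypotheses give $\mathrm{im}(\gamma)=\gen{[\mu]}=\mathrm{im}(\partial')$ and, via ``half lives, half dies'', the injectivity of $\partial'$; finally Poincar\'e duality on the closed orientable $N$ turns $H_2(N)\cong\mathrm{coker}(\beta)\cong\ker(loc_1)$ into $H^1(N)$. Each approach buys something: the paper's argument produces the natural restriction/inclusion-induced maps in the five-term sequence and stays close to the group-theoretic picture underlying the number-theoretic analogy, while yours is self-contained $3$-manifold topology, makes visible exactly where each hypothesis enters (a single filling torus; the meridian of infinite order in $H_1(M)$), and yields the extra identification $\ker(loc_1)\cong H_2(E_\M G_K/U)$ of the Tate--\v{S}afarevi\v{c}-type kernel with the second homology of the branched cover. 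Your two flagged technical points are genuine but unproblematic: $\beta$ and $\gamma$ are indeed inclusion-induced, because Proposition C is extracted from the long exact sequence of $(X_U,T_U)$ composed with Lefschetz duality, and half-lives-half-dies applies because $M$ is a compact orientable $3$-manifold. The one thing you leave unverified---harmless, since the Proposition only asserts the existence of an exact sequence---is that your left-hand map, defined through duality isomorphisms, agrees with the restriction $H^1(N)\to H^1(U)$; checking that would amount to exactly the lower-square compatibility the paper handles via Hatcher's Theorem 3.43.
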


Besides the charming analogies, Proposition D highlights also a remarkable difference between knots and numbers. That is, in the realm of knots the first homology and the first cohomology group of the same space $E_\M G_K/U$ appear as candidates to complete the degree-$1$ subsequence of \eqref{eq:poitou-tate U} on both ends. This has no (known) analogue in the Poitou-Tate setting, which then is more difficult to handle.

An illustrative example is Leopoldt Conjecture in number theory. Let $F$ be an algebraic number field with $[F:\Q]=n$. Suppose for simplicity that $F\supseteq\Q[i]$, so that $F$ has no real embeddings, and that $F$ contains a primitive $p$th root of unity, for some prime $p\in\Z$. Set $S_p=\{Q\id\OO_F\mid Q\cap\Z= p\}$, $S_\infty$ the set of infinite places of $F$ (cf.~\cite[\S III.1]{neukirch}) and $S=S_p\cup S_\infty$ (note that this is a finite set). Furthermore, let $D_Q$ be the decomposition group of $Q$ (cf.~\cite[\S I.9]{neukirch}) and let $G_F^S$ be the Galois group of the maximal field extension of $F$ unramified outside $S$ (cf.~\cite[\S III.2]{neukirch}).
Then there is an exact sequence
\[
\xymatrix{
 H^1\left(G_F^S,\Z_p(1)\right)\ar[r] & \coprod_{Q\in S}H_1\left(D_Q,\Z_p\right)\ar[r] & H_1\left(G_F^S,\Z_p\right),
 }
\]
where $\Z_p(1)$ is the \emph{Tate twist} (cf.~\cite[\S VII.3]{neukirch}).
The Leopoldt Conjecture claims that if one completes this sequence to
\[
\xymatrix@R-10pt{
0\ar[r] & T_1\ar[d]&&&\\
& H^1\left(G_F^S,\Z_p(1)\right)\ar[r] & \coprod_{Q\in S}H_1\left(D_Q,\Z_p\right)\ar[r] & H_1\left(G_F^S,\Z_p\right)\ar[d]&\\
&&& T_2\ar[r] & 0
 }
\]
then the groups $T_1$ and $T_2$ are torsion groups. There are cases, for example when one passes to the maximal pro-$p$ quotient $G_F^S(p)$ of $G_F^S$, in which $T_2$ is known to be even finite, but this still does not give any hints on $T_1$. On the contrary, in the assumptions of Proposition D, the finiteness of $H_1(E_\M G_K/U,\Z)$ would immediately imply the triviality of $H^1(E_\M G_K/U,\Z)$, via the Universal Coefficient Theorem for cohomology (cf.~\cite[\S 3.1]{hatcher}).

R.~Bieri and B.~Eckmann studied a notion of homological duality for groups that generalises Poincar\'e duality (cf.~\cite{bieck}). Nontrivial knot groups are duality groups in this sense.

In my Ph.D.~thesis \cite{fitz}, Bieri-Eckmann duality of knot groups is used to prove Propositions C and D through the machinery of derived categories with duality.

\subsection{Equivariant duality}
The natural choice of a (co)homology theory for equivariant CW-complexes with stabilisers in a prescribed family of subgroups is Bredon (co)homology (cf.~\cite{bredon}, \cite{lueckbredon}).
M.~Davis and I.~Leary (cf.~\cite[\S 1]{davlea}) and C.~Mart\'\i nez-P\'erez (cf.~\cite[Definition 5.1]{martinez} have extended the notions of Poincar\'e duality and (Bieri-Eckmann) duality to Bredon cohomology with respect to the family of finite subgroups. However, their generalisations actually make sense for Bredon cohomology with respect to any family of subgroups.
\begin{defn}\label{def:bredon duality}
Let $\F$ be a family of subgroups of a group $G$ and let $R$ be a ring.
$G$ is Bredon $\FP$ for the family $\F$ (in short, $\F-\FP$) if the trivial $\F$-Bredon module $\Z_\F$ has a finite-length resolution by finitely generated projective $\F$-Bredon modules (cf.~\cite[Chapter 3]{fluch}, \cite[\S 3]{kromarnuc}).

$G$ is a \emph{$\F$-Bredon duality group} over $R$ if it is $\F-\FP$ and for every $H\in\F$ there is a positive integer $n_H$ such that
\begin{equation}\label{eq:bredon duality}
H^i(WH,R[WH])=\begin{cases}
                     (R\mbox{-flat module}) & \mbox{if }i=n_H\\
                     0 & \mbox{otherwise.}
                     \end{cases}
\end{equation}
Here $WH=N_G H/H$ is the \emph{Weyl group} of $H$.
In particular, $G$ is a \emph{$\F$-Bredon Poincar\'e duality group} over $R$ if, for all $H\in \F$, $H^{n_H}(WH,R[WH])=R$.
\end{defn}

The $\F-\FP$ condition can be verified topologically (cf.~\cite[Theorem 0.1]{luemei}), exactly as for the classical $FP$ condition (cf.~\cite[\S VIII.6]{brown}). Detailed discussions of Bredon cohomological finiteness conditions (for arbitrary families of subgroups) and Bredon (Poincar\'e) duality groups (for the family of finite subgroups) can be found in \cite{fluch} and \cite{SJGarticle, stjohngreen}.

\begin{thmE}
Let $G_K$ be a prime knot group. Then $G_K$ is a $\M$-Bredon duality group, but not a $\M$-Bredon Poincar\'e duality group, over $\Z$.
\end{thmE}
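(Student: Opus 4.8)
The plan is to reduce both assertions to the local conditions of Definition~\ref{def:bredon duality}, by computing, for each $H\in\M$, the Weyl group $WH=N_{G_K}H/H$ together with its cohomology with group-ring coefficients $H^\ast(WH,\Z[WH])$.

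First I would settle $\M$-$\FP$ topologically, using the model of $E_\M G_K$ supplied by the Main Theorem. Taking for $EG_K$ the universal cover of the compact knot complement---a cocompact, at most $3$-dimensional, free $G_K$-CW-complex---each corner of the pushout is cocompact: the induced complexes $\bigsqcup_{G_K/H}\R^2=G_K\times_H EH$ and the induced mapping cylinders $G_K\times_H Cyl(\pi)$ have finitely many $G_K$-orbits of cells, since $H\cong\Z^2$ acts cocompactly on $\R^2$. A pushout of cocompact finite-dimensional $G_K$-CW-complexes along cellular maps is again such; hence the cellular Bredon chain complex of this model is a finite-length resolution of $\Z_\M$ by finitely generated free $\M$-Bredon modules, and $G_K$ is $\M$-$\FP$ by \cite[Theorem~0.1]{luemei}.

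Next I would list the subgroups of $\M$ up to conjugacy---the trivial group and the cyclic groups $\langle a^k\rangle$, $k\geq1$---and compute their Weyl groups. For the trivial subgroup, $WH=G_K$. For $\langle a^k\rangle$, abelianisation sends a meridian to a generator of $H_1(G_K)\cong\Z$, so no element of $G_K$ inverts a nontrivial power of a meridian; thus $N_{G_K}(\langle a^k\rangle)$ coincides with the centraliser $C_{G_K}(a^k)$, which for a prime knot group is the peripheral subgroup $H=\langle a,l\rangle\cong\Z^2$. Hence $W\langle a^k\rangle\cong\Z^2/\langle a^k\rangle\cong\Z\times\Z/k$. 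For $WH=G_K$ I would use Poincaré--Lefschetz duality on the aspherical knot complement: with $\widetilde X$ the universal cover above, $H^i(G_K,\Z G_K)\cong H^i_c(\widetilde X)\cong H_{3-i}(\widetilde X,\partial\widetilde X)$, and since $\widetilde X$ is contractible with $\partial\widetilde X=\coprod_{G_K/H}\R^2$, the long exact sequence of the pair gives $H^i(G_K,\Z G_K)=0$ for $i\neq2$ and $H^2(G_K,\Z G_K)\cong\ker\!\big(\Z[G_K/H]\to\Z\big)$, the augmentation kernel of the permutation module, which is $\Z$-free; so $n_1=2$. For $WH=\Z\times\Z/k$ a Künneth computation from $H^\ast(\Z,\Z[\Z])=\Z$ in degree $1$ and $H^\ast(\Z/k,\Z[\Z/k])=\Z$ in degree $0$ yields $H^i(\Z\times\Z/k,\Z[\Z\times\Z/k])=\Z$ for $i=1$ and $0$ otherwise, so $n_{\langle a^k\rangle}=1$. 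Each dualising value is $\Z$-flat, so $G_K$ is a $\M$-Bredon duality group; and the failure of the Poincaré condition is visible at once, since at the trivial subgroup the dualising module $H^2(G_K,\Z G_K)$ is the infinite-rank augmentation kernel rather than $\Z$.

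I expect the main obstacle to be the duality condition for the \emph{whole} family, specifically for the proper powers $\langle a^k\rangle$ with $k\geq2$. Their Weyl groups $\Z\times\Z/k$ contain torsion and so have infinite cohomological dimension; they are not duality groups in the classical Bieri--Eckmann sense, which might suggest the theorem fails. What rescues it is that Definition~\ref{def:bredon duality} tests cohomology with the \emph{group-ring} coefficients $\Z[WH]$, not with trivial coefficients, and the Künneth computation shows this remains concentrated in a single degree with flat value $\Z$. Pinning down the normaliser computation and this coefficient subtlety is the crux; identifying the dualising module of $G_K$ as a non-cyclic permutation module, which is what breaks Poincaré duality and reflects the toral boundary of the knot complement, is the other point requiring care. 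The remaining verifications are routine.
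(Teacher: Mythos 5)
Your proposal is correct, and its skeleton is exactly the one the paper follows: establish $\M$-$\FP$ from the cocompact, finite-dimensional pushout model of $E_\M G_K$, then verify condition \eqref{eq:bredon duality} case by case for $M$ trivial, $M$ maximal, and $M=\gen{g^{-1}a^kg}$ with $k>1$, reading off the failure of Poincar\'e duality at the trivial subgroup. Where you genuinely diverge is in the two cohomology computations. For $W\{1\}=G_K$ the paper simply cites Bieri--Eckmann (nontrivial knot groups are duality groups of dimension $2$, with $H^2(G_K,\Z G_K)\cong\Z^\infty$), whereas your Poincar\'e--Lefschetz argument on $\widetilde{X_K}$ rederives this and is more explicit, exhibiting the dualising module as the augmentation kernel of $\Z[G_K/H]$; either way one gets a $\Z$-free module of infinite rank, not $\Z$, which is what breaks the Poincar\'e condition. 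For the torsion case $WM\cong\Z\times\Z/k$, the paper lets $WM$ act on the Cayley graph $\mathcal{C}(F_r)$ (a line, since $r=1$ for prime knots) with finite stabilisers and invokes \cite[\S VIII.7, Exercise 4]{brown} to identify $H^\bullet(WM,\Z[WM])$ with $H^\bullet_c(\R)=\Z$ concentrated in degree $1$; you instead compute algebraically, using that $H^\bullet(\Z/k,\Z[\Z/k])=\Z$ in degree $0$ (the group ring of a finite group is coinduced) and tensoring via K\"unneth with $H^\bullet(\Z,\Z[\Z])=\Z$ in degree $1$ --- legitimate here because $\Z$ admits a finite free resolution over $\Z[\Z]$ and the relevant cohomology is $\Z$-free, though this justification should be spelled out. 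The paper's geometric route covers composite knots ($WM\cong\Z/k\times F_r$, $r>1$) verbatim, while yours is more self-contained and isolates precisely the coefficient subtlety (group-ring versus trivial coefficients for the torsion factor) that makes torsion in the Weyl groups harmless. One compression to flag: your identification $N_{G_K}(\gen{a^k})=C_{G_K}(a^k)=\gen{a,l}$ needs both Simon's theorem ($C_{G_K}(a)=\gen{a,l}$ for prime knots) and Jaco--Shalen's result that the centraliser of a peripheral element equals the centraliser of any of its powers; this is exactly the content of the paper's Lemma \ref{lem:non-cable normalizer}, so nothing is actually missing, but the second citation should be made explicit.
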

The property extends to all normal subgroups of finite index:
\begin{thmEsharp}
Let $G_K$ be a prime knot group and $U$ a normal subgroup of finite index. Then $U$ is a $\M_U$-Bredon duality group, but not a $\M_U$-Bredon Poincar\'e duality group, over $\Z$.
\end{thmEsharp}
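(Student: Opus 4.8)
The plan is to derive Theorem E$\sharp$ from Theorem E by transporting each ingredient along the finite-index inclusion $U\id G_K$, exploiting that both the classifying space and the Weyl groups restrict well. For the finiteness condition I would first observe that the model of $E_\M G_K$ supplied by the Main Theorem, regarded as a $U$-CW-complex, is simultaneously a model for $E_{\M_U}U$ (this is exactly what underlies Theorem A$\sharp$): for a subgroup $L\leq U$ one has $L\in\M_U$ precisely when $L\in\M$, so the fixed-point sets $(E_\M G_K)^L$ are contractible for $L\in\M_U$ and empty otherwise. Since this model is $G_K$-cocompact and $[G_K:U]<\infty$, it is $U$-cocompact, whence the topological characterisation of the $\FP$ condition (cf.~\cite[Theorem 0.1]{luemei}) gives that $U$ is $\M_U$-$\FP$.

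Next I would analyse the Weyl groups over $U$. The family $\M_U$ consists of the trivial subgroup together with the infinite cyclic meridian-power subgroups $L=\gen{m^k}$ that lie in $U$. For each such $L$ one has $N_U(L)=N_{G_K}(L)\cap U$, so the inclusion embeds $W_U L=N_U(L)/L$ as a subgroup of $W_{G_K}L=N_{G_K}(L)/L$ of index at most $[G_K:U]$. Thus $W_U\{1\}=U$ sits as a finite-index subgroup of $G_K$, while for a nontrivial $L$ the group $W_U L=(\Z^2\cap U)/L$ is a finite-index subgroup of the virtually infinite cyclic group $W_{G_K}L$ appearing in the proof of Theorem E, hence itself of the form $\Z\times F$ with $F$ finite.

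The cohomological part of the duality condition then splits into two familiar computations. For $L=\{1\}$, since finite-index subgroups of Bieri-Eckmann duality groups are again duality groups of the same dimension with $\Z$-flat dualizing module, the group $U$ inherits from $G_K$ the property that $H^i(U,\Z U)$ vanishes for $i\neq 2$ while $H^2(U,\Z U)$ is $\Z$-flat, giving $n_{\{1\}}=2$. For a nontrivial $L$ I would instead compute directly, via the Lyndon-Hochschild-Serre spectral sequence of $1\to F\to\Z\times F\to\Z\to 1$: as $\Z[\Z\times F]$ is $\Z[F]$-free the sequence collapses onto $H^p(\Z,\Z[\Z])$, yielding $H^i(W_U L,\Z[W_U L])=\Z$ for $i=1$ and $0$ otherwise, so $n_L=1$. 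In every case the cohomology is concentrated in a single positive degree and is $\Z$-flat, establishing that $U$ is a $\M_U$-Bredon duality group.

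Finally, to rule out Poincar\'e duality I would isolate the contribution of $L=\{1\}$. Being a $\M_U$-Bredon Poincar\'e duality group would force $H^2(U,\Z U)\cong\Z$, that is, $U$ would be a $2$-dimensional Poincar\'e duality group and hence a closed surface group. But $U$ is a torsion-free subgroup of finite index in $G_K$, so this would in turn make $G_K$ a $2$-dimensional Poincar\'e duality group, which is impossible: $G_K^{ab}=\Z$ is torsion-free of odd rank, whereas the abelianisation of a closed surface group has even rank or nontrivial $2$-torsion. Hence $H^2(U,\Z U)\not\cong\Z$ and $U$ is not a $\M_U$-Bredon Poincar\'e duality group. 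The step I expect to demand the most care is keeping every Weyl group $W_U L$ infinite, so that each $n_L$ is genuinely positive; this hinges on $N_{G_K}(L)$ being the full peripheral $\Z^2$ rather than $L$ alone, a structural feature of prime knot groups already underpinning Theorem E that restricts to $U$ verbatim.
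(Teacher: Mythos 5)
Your proof is correct, and its overall strategy --- restrict the cocompact model of $E_\M G_K$ to $U$, identify $\M_U$ with $\{L\le U\mid L\in\M\}$, and control Weyl groups via $N_U(L)=N_{G_K}(L)\cap U$ --- coincides with the paper's. You diverge in the two cohomological computations. For nontrivial $L\in\M_U$ the paper does not identify $W_UL\cong\Z\times F$ and run Lyndon--Hochschild--Serre; it instead lets $W_UL$, as a finite-index subgroup of $W_{G_K}L$, act cocompactly with finite stabilizers on the Cayley graph of the free part (a line, since $K$ is prime), so that $H^\bullet(W_UL,\Z[W_UL])\cong H^\bullet_c(\R)$ is $\Z$ concentrated in degree $1$ (Brown, Ch.~VIII.7); your spectral-sequence computation is an equally valid, purely algebraic substitute. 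The more substantial difference is the non-Poincar\'e-duality part: the paper computes $H^i(U,\Z U)\cong H^i_c(EG_K,\Z)\cong H^i(G_K,\Z G_K)$, which by Bieri--Eckmann is $\Z^\infty$ in degree $2$, so both the $L=\{1\}$ duality condition and the failure of Poincar\'e duality are read off at once from $\Z^\infty\not\cong\Z$. You instead argue by contradiction through a chain of heavyweight results: torsion-free finite extensions of $PD^2$ groups are $PD^2$, the Eckmann--M\"uller(--Linnell) theorem that $PD^2$ groups are surface groups, and the parity/torsion obstruction on abelianisations of surface groups. That chain is sound (knot groups are torsion-free, having finite cohomological dimension), but it is an unnecessary detour: the finite-index heredity you already invoke for the $L=\{1\}$ case amounts, via Shapiro's lemma, to the isomorphism $H^\ast(U,\Z U)\cong H^\ast(G_K,\Z G_K)$, and the right-hand side is explicitly $\Z^\infty$ (Bieri--Eckmann, Theorem 6.2, already cited in the proof of Theorem E), which rules out Poincar\'e duality in one line and keeps the whole argument at the level of elementary homological algebra.
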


%
%
%
%
%

\section{The construction of ambient classifying spaces}\label{sec:construction}

\subsection{Preliminaries}\label{ssec:preliminaries}
Let $G$ be a group. A collection $\F$ of subgroups of $G$ is a \emph{family} if it is closed under taking subgroups and under conjugation. 
A \emph{[model of the] classifying space} $E_\F(G)$ \emph{of the family} $\F$ is a $G$-CW-complex $X$ such that for all subgroups $H\leq G$ the fixed-point set 
\(
 X^H=\left\{ x\in X\mid\forall h\in H:h(x)=x\right\}
\)
is contractible if $H\in\F$ and empty otherwise.

Classifying spaces are known to exist and to be unique up to $G$-homotopy for all families of all groups $G$ (cf.~\cite[Subsection 1.2]{luecksurvey}). Yet, the construction provided by J. Milnor to show general existence (cf.~\cite{milnorI, milnorII}) usually produces very complicated spaces to deal with. A major problem is to find other models that are as small and nice as possible, on which to carry effective computations.

A result by W. L\"uck and M. Weiermann (\cite[Theorem 2.3]{luewei2007}) explains how to build classifying spaces for bigger families from those for smaller families. We recall how their machinery works.
Let $\F\subseteq\G$ be two families of the group $G$. An equivalence relation $\sim$ on $\G\setminus\F$ with the additional properties
\begin{eqnarray}
H,K\in \G\setminus\F, H\subseteq K &\Longrightarrow & H\sim K\label{tilde1}\\
\label{tilde2} H,K\in \G\setminus\F, H\sim K &\Longrightarrow &\forall g\in G: g^{-1}Hg\sim g^{-1}Kg
\end{eqnarray}
will be called a \emph{strong equivalence relation}.
Given such $\sim$, we denote $[\G\setminus \F]$ the set of $\sim$-equivalence classes and $[H]$ the $\sim$-equivalence class of $H$ and define the subgroup of $G$
\[
N_G[H]=\{g\in G\mid [g^{-1}Hg]=[H]\}.
\]
Property (\ref{tilde2}) says that the conjugation of subgroups induces an action on $[\G\setminus \F]$ with respect to which $N_G[H]$ is the stabiliser of $[H]$.
Finally, construct the family
\[
\G[H]=\{K\leq N_G[H]\mid K\in\G\setminus \F, [K]=[H]\}\cup \{K\leq N_G[H]\mid K\in\F\}
\]
of subgroups of $N_G[H]$.
\begin{thm}[L\"uck \& Weiermann]\label{thm:lueckweiermann} Let $I$ be a complete system of representatives $[H]$ of the $G$-orbits in $[\G\setminus \F]$ under the $G$-action coming from conjugation. Choose arbitrary $N_G[H]$-CW-models for $E_{\F\cap N_G[H]}N_G[H]$ and $E_{\G[H]}(N_G[H])$,
and an arbitrary $G$-CW-model for $E_\F G$. Define a $G$-CW-complex $Y$ by the
cellular $G$-pushout
\begin{equation}\label{eq:pushout diagram}
\xymatrix{
\bigsqcup_{[H]\in I}G\times_{N_G[H]}E_{\F\cap N_G[H]}(N_G[H])\ar^-\phi[rrr]\ar^{\bigsqcup_{[H]\in I}id_G\times\psi_{[H]}}[d]&&& E_\F G\ar[d]\\
\bigsqcup_{[H]\in I}G\times_{N_G[H]}E_{\G[H]}(N_G[H])\ar[rrr]&&&Y
}
\end{equation}
such that the $G$-map $\phi$ is cellular and all the $N_G[H]$-maps $\psi_{[H]}$ are inclusions or viceversa.

Then $Y$ is a model for $E_\G G$.
\end{thm}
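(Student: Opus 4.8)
The plan is to verify directly the defining property of a model for $E_\G G$: for every subgroup $L\leq G$, the fixed-point set $Y^L$ must be contractible if $L\in\G$ and empty otherwise. The whole argument runs through the interaction of the fixed-point functor $(-)^L$ with the $G$-pushout \eqref{eq:pushout diagram}. Since the $\psi_{[H]}$ are inclusions (or, in the ``viceversa'' case, $\phi$ is the cofibration), one of the two legs out of the top-left corner is a $G$-cofibration, so the pushout is a homotopy pushout and $(-)^L$ carries the square to a pushout of spaces; hence $Y^L$ is assembled from the $L$-fixed points of the other three corners. The only other input I would record at the outset is the standard computation of the fixed points of an induced complex,
\[
(G\times_{N_G[H]}X)^L=\bigsqcup_{gN_G[H]:\,g^{-1}Lg\leq N_G[H]}X^{g^{-1}Lg},
\]
which reduces each corner to a disjoint union of fixed-point sets of the chosen $N_G[H]$-models, every one of which is contractible or empty according as the relevant conjugate of $L$ lies in the pertinent family.

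With these two facts in hand I would split into three cases. If $L\notin\G$ then $L\notin\F$ and no conjugate of $L$ lies in $\F\cap N_G[H]$, in $\G[H]\subseteq\G$, or in $\F$; thus all four corners have empty $L$-fixed points and $Y^L=\emptyset$. If $L\in\F$ then $(E_\F G)^L$ is contractible, and I would observe that the left vertical arrow induces on $L$-fixed points a disjoint union, over a common index set of cosets $gN_G[H]$ with $g^{-1}Lg\in\F\cap N_G[H]$, of maps between contractible spaces. Such a map is a homotopy equivalence, and by the gluing lemma for (homotopy) pushouts the opposite corner is preserved up to homotopy, so $Y^L\simeq(E_\F G)^L$ is contractible.

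The heart of the proof is the case $L\in\G\setminus\F$. Here $(E_\F G)^L=\emptyset$ and, since no conjugate of $L$ can lie in $\F$, the top-left corner also has empty $L$-fixed points; therefore $Y^L$ collapses onto the $L$-fixed points of the bottom-left corner, $\bigsqcup_{[H]\in I}\bigsqcup_{gN_G[H]}(E_{\G[H]}N_G[H])^{g^{-1}Lg}$. A summand is nonempty, and then contractible, precisely when $g^{-1}Lg\leq N_G[H]$ and $[g^{-1}Lg]=[H]$, so everything comes down to showing that there is exactly one such pair $([H],gN_G[H])$. Uniqueness of $[H]$ is immediate from $I$ being a complete set of representatives of the $G$-orbits in $[\G\setminus\F]$, as any admissible $[H]$ must be the representative of the orbit of $[L]$. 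For uniqueness of the coset I would argue that if $gN_G[H]$ and $g'N_G[H]$ both qualify then, setting $m=g^{-1}g'$, property \eqref{tilde2} applied to $g^{-1}Lg\sim H$ gives $[m^{-1}Hm]=[m^{-1}(g^{-1}Lg)m]=[g'^{-1}Lg']=[H]$, whence $m\in N_G[H]$ and the two cosets coincide. This pins $Y^L$ down to a single contractible component. The main obstacle is exactly this final bookkeeping: it is where both defining properties of a strong equivalence relation, and especially the conjugation-compatibility \eqref{tilde2}, are indispensable, and where one must control the nonempty summands simultaneously across the orbit representatives in $I$ and across the cosets inside each $N_G[H]$.
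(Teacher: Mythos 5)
The paper itself contains no proof of this statement---it is quoted verbatim as Theorem 2.3 of L\"uck--Weiermann \cite{luewei2007}---so the only meaningful comparison is with the original argument, and your proposal reproduces it essentially step for step: commuting $(-)^L$ past the pushout via the cofibration leg, the fixed-point formula for induced complexes, the three-case split according to $L\notin\G$, $L\in\F$, $L\in\G\setminus\F$, and the coset bookkeeping through property \eqref{tilde2}. The only step you leave implicit is the \emph{existence} half of ``exactly one pair'' in the crucial case $L\in\G\setminus\F$: after picking the representative $[H]\in I$ of the orbit of $[L]$ and $g$ with $[g^{-1}Lg]=[H]$, you still need $g^{-1}Lg\leq N_G[H]$ for that summand to appear in the disjoint union at all---this is immediate (conjugation by any $k\in g^{-1}Lg$ fixes the subgroup $g^{-1}Lg$, hence by \eqref{tilde2} and transitivity fixes the class $[g^{-1}Lg]=[H]$, so $g^{-1}Lg$ lies in the stabiliser $N_G[H]$), but without it $Y^L$ could a priori be empty, which is precisely what contractibility must exclude.
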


Let $K\simeq S^1\subseteq S^3$ be a (tame) knot. A \emph{tubular neighbourhood} $V_K$ of $K$ is an open solid torus centered at $K$ and thin enough to have no self-intersections. The \emph{knot group} of $K$ is $G_K=\pi_1(S^3\setminus K,x)\cong\pi_1(S^3\setminus V_K,x)$. The space $X_K=S^3\setminus V_K$ is a compact $3$-manifold with boundary and has the structure of a CW-complex. It will be called the \emph{closed knot complement}. The fundamental group of the boundary $T_K=\partial V_K$ is isomorphic to $\Z^2$. It is generated by a \emph{meridian} $m$ and a \emph{longitude} $l$, characterised as in \cite[\S 3.A]{burzie}. We always choose a base point $x\in T$ when computing fundamental groups. If $K$ is nontrivial, that is, not ambient-isotopic to the standard circle, then the inclusion-induced homomorphism $\iota_\ast\colon\pi_1(T_K,x)\to\pi_1(X_K,x)$ is injective. In particular, $\iota_\ast(m)$ is a generator, say $a$, of $G_K$ and $H_a:=\iota_\ast(\pi_1(T_K,x))\cong\Z^2$ is the \emph{peripheral subgroup} containing $a$. All the other generators of $G_K$ are known to be conjugate to either $a$ or $a^{-1}$. Hence it is possible to obtain peripheral subgroups containing each of them by varying the basepoint $x$. In view of this, with a little abuse we call \emph{meridians} of a knot all the conjugates of the generators of its knot group. The \emph{family of meridians} is the collection
\[
\M=\left\{\gen{g^{-1}a^ig}\mid g\in G_K, i\in\N\right\}
\]
of infinite cyclic subgroups generated by each power of a meridian.

The above also makes sense for (tame) links. One has only to choose pairwise separated tubular neighbourhoods, one for each component. The big difference is that the generators of a link group need not be all conjugate to a fixed one or its inverse, so in general the family $\M$ has a more complicated description. 

As a consequence of C.~Papakyriakopoulos's Sphere Theorem (cf.~\cite{papa}), the closed complement $X_K$ of a knot $K$ in $S^3$ is aspherical. Hence, it is a model of the Eilenberg-MacLane space $BG_K=K(G_K,1)$ (cf.~\cite[\S 1.B]{hatcher}. Its universal cover $\widetilde{X_K}$ is then a model of the total space $E G_K$ of the universal principal bundle $EG_K\to BG_K$.

The space $\widetilde{X_K}$ can be also viewed as a model of the classfying space of $G_K$ for the trivial family $\{\{1\}\}$. We can then apply L\"uck and Weiermann's Theorem setting $G$ to be the group of the knot $K$, $\F$ to be the trivial family, $\G$ to be the family $\M$ of meridians.

If the knot $K$ is trivial, then $\M$ is the set of all subgroups of $G_K\simeq\Z$, so $E_\M G_K$ is a point. This degenerate situation having been settled, from now on we assume $K$ to be nontrivial.

\begin{rem}\label{rem:GG partially ordered}
$\M$ is a partially ordered set with respect to inclusion, it has maximal elements $\gen{g^{-1}ag}, g\in G_K$ and each element of $\M$ is included in a maximal one.
\end{rem}

\subsection{Definition of $\sim$}
For $H\in\M\setminus\F$, we set $\downarrow\! H:=\{K\in\M\setminus\F\mid K\leq H\}$.
%
\begin{lem}
The relation $\sim$ on $\M\setminus\F$ defined as
\[
H\sim K \Longleftrightarrow \exists L\in\M\setminus\F :L\leq H,L\leq K
\]
is a strong equivalence relation.
\end{lem}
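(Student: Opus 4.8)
The plan is to check, in order, the three axioms of an equivalence relation and then the two structural conditions (\ref{tilde1}) and (\ref{tilde2}). The feature I will lean on throughout is that every member of $\M\setminus\F$ is \emph{infinite cyclic}: by definition each element of $\M$ has the form $\gen{g^{-1}a^ig}$, and removing $\F=\{\{1\}\}$ leaves exactly those with $i\geq 1$. With this in hand, reflexivity is immediate by taking $L:=H$ as the common lower bound, which lies in $\M\setminus\F$ precisely because $H$ does; and symmetry is built into the definition, since the clause ``$L\leq H$ and $L\leq K$'' is symmetric in $H$ and $K$.

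The only substantive step is transitivity, and this is where the real content sits. Suppose $H\sim K$ and $K\sim K'$, witnessed by $L,L'\in\M\setminus\F$ with $L\leq H$, $L\leq K$, $L'\leq K$, $L'\leq K'$. Both $L$ and $L'$ are then nontrivial subgroups of the infinite cyclic group $K$. The key observation is that any two nontrivial subgroups of $\Z$ meet nontrivially: under an isomorphism $K\cong\Z$ one has $L=m\Z$, $L'=n\Z$ with $m,n\neq 0$, whence $L\cap L'=\mathrm{lcm}(m,n)\Z\neq 0$. So $L\cap L'\neq\{1\}$. Moreover $L\cap L'$ is a nontrivial subgroup of $L=\gen{g^{-1}a^ig}$, hence of the form $\gen{g^{-1}a^{ik}g}$, so it again lies in $\M\setminus\F$. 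Since $L\cap L'\leq L\leq H$ and $L\cap L'\leq L'\leq K'$, this group witnesses $H\sim K'$.

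Condition (\ref{tilde1}) is then trivial: if $H\subseteq K$ with $H,K\in\M\setminus\F$, the group $L:=H$ itself lies in $\M\setminus\F$ and satisfies $L\leq H$, $L\leq K$, so $H\sim K$. Condition (\ref{tilde2}) follows from $\M$ being a family, hence closed under conjugation: if $L\in\M\setminus\F$ realises $H\sim K$, then for any $g\in G_K$ the conjugate $g^{-1}Lg$ still lies in $\M$, is nontrivial (hence in $\M\setminus\F$), and satisfies $g^{-1}Lg\leq g^{-1}Hg$ and $g^{-1}Lg\leq g^{-1}Kg$; as $g^{-1}Hg$ and $g^{-1}Kg$ likewise remain in $\M\setminus\F$, we get $g^{-1}Hg\sim g^{-1}Kg$.

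The main obstacle, to the extent there is one, is confined to transitivity — not in showing the intersection is nontrivial (a one-line fact about $\Z$) but in verifying that the witness $L\cap L'$ does not escape $\M\setminus\F$; this is secured by the fact that subgroups of a member of $\M$ are themselves members of $\M$. It is worth flagging that no malnormality of the peripheral subgroup is required here, because the common lower bound is always produced \emph{inside} the single cyclic group $K$, rather than across two incomparable maximal meridians.
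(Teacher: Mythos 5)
Your proof is correct and takes essentially the same approach as the paper: both settle transitivity by producing a common nontrivial lower bound inside the middle infinite cyclic group, the paper phrasing this as downward-directedness of $\downarrow\! H$ while you compute the intersection of the two witnesses explicitly and check that it stays in $\M\setminus\F$. Your verification of the remaining axioms and of the two strong-equivalence conditions is routine, matching the paper's remark that transitivity is the only non-obvious point.
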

\begin{proof}
The only non obvious property is transitivity. A partially ordered set $(P,\preceq)$ is called \emph{downward-directed} if for all $x,y\in P$ there is a $z\in P$ such that $z\preceq x$ and $z\preceq y$. As all the elements of $\M\setminus\F$ are infinite cyclic, for all $H\in\M\setminus\F$ the set $\downarrow\! H$ is downward-directed with respect to inclusion.

Let then $H_1\sim H_2$ and $H_2\sim H_3$, that is $\exists K_1\in\M\setminus\F :K_1\leq H_1,K_1\leq H_2$ and $\exists K_2\in\M\setminus\F :K_2\leq H_2,K_2\leq H_3$. In particular $K_1, K_2\in\ \downarrow\! H_2$, so that by directedness there exists a $L\in\ \downarrow\! H_2\subseteq\M\setminus\F$ which is included in both. All three $H_i$'s include this $L$, hence $H_1\sim H_3$.
\end{proof}

Recalling Remark \ref{rem:GG partially ordered}, we can find a maximal element in each $\sim$-equivalence class and we choose it as the representative of its class.

\subsection{$N_{G_K}[H]$ and $\M[H]$}
As far as we are concerned with knots, the conjugation action of $G_K$ on $[\M\setminus\F]$ is transitive by the very definition. So the stabilisers are all conjugate to one another and we need to study only $N_{G_K}[\gen{a}]$.

\begin{lem}\label{lem:non-cable normalizer}
If $K$ is a nontrivial knot, then $N_{G_K}[\gen{a}]=N_{G_K}(a)=C_{G_K}(a)$.
\end{lem}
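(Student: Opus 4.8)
The plan is to prove the two easy containments $C_{G_K}(a)\subseteq N_{G_K}(a)\subseteq N_{G_K}[\gen a]$ by unwinding the definitions, and then to close the chain with the single nontrivial inclusion $N_{G_K}[\gen a]\subseteq C_{G_K}(a)$. The first two are immediate: if $g$ centralises $a$ then $g^{-1}\gen a g=\gen{g^{-1}ag}=\gen a$, so $g$ normalises $\gen a$; and if $g$ normalises $\gen a$ then $g^{-1}\gen a g=\gen a$, whence $[g^{-1}\gen a g]=[\gen a]$, the relation $\gen a\sim\gen a$ being witnessed by $L=\gen a$ itself. All the content is therefore in the reverse inclusion.

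So let $g\in N_{G_K}[\gen a]$, i.e.\ $[g^{-1}\gen a g]=[\gen a]$. By the definition of $\sim$ there is a nontrivial $L\in\M\setminus\F$ with $L\leq\gen a$ and $L\leq g^{-1}\gen a g=\gen{g^{-1}ag}$. Both $\gen a$ and $\gen{g^{-1}ag}$ are infinite cyclic, so $L=\gen{a^p}=\gen{g^{-1}a^q g}$ for some $p,q\geq 1$; since two infinite cyclic subgroups coincide exactly when their generators differ by a sign, this gives $a^p=g^{-1}a^{\pm q}g$, that is $g a^p g^{-1}=a^{\pm q}$.

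To pin down both the exponent and the sign I would apply the abelianisation $\varphi\colon G_K\to G_K^{ab}\cong\Z$, under which the meridian $a$ maps to a generator. Then $p=\varphi(a^p)=\varphi(ga^pg^{-1})=\varphi(a^{\pm q})=\pm q$, forcing the sign to be $+$ and $p=q$. Hence $ga^pg^{-1}=a^p$, i.e.\ $g\in C_{G_K}(a^p)$, and the statement is reduced to the equality $C_{G_K}(a^p)=C_{G_K}(a)$.

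The main obstacle is precisely this last equality: the inclusion $C_{G_K}(a)\subseteq C_{G_K}(a^p)$ is trivial, but the reverse is a genuinely group-theoretic fact about knot groups. I would deduce it from the structure of the peripheral subgroup $H_a\cong\Z^2$: for a nontrivial knot, $H_a$ is a maximal abelian subgroup of $G_K$ and coincides with the centraliser of each of its nontrivial elements, a standard consequence of the asphericity and Haken-ness of $X_K$ (cf.~\cite{burzie}). Since $a$ and $a^p$ are both nontrivial elements of $H_a$, this yields $C_{G_K}(a^p)=H_a=C_{G_K}(a)$ at once. Equivalently, one may argue via unique roots: setting $b=gag^{-1}$, the relation $b^p=a^p$ between two conjugate meridians forces $b=a$, again using that knot groups are torsion-free $3$-manifold groups in which $a^p$ admits a unique $p$-th root. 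Either way the delicate point is the appeal to the $3$-manifold structure of $X_K$; everything preceding it is formal.
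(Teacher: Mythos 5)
Your reduction is exactly the one the paper uses, and it is correct as far as it goes: the chain $C_{G_K}(a)\subseteq N_{G_K}(a)\subseteq N_{G_K}[\gen a]$ is immediate, unwinding $\sim$ gives $ga^pg^{-1}=a^{\pm q}$, and projecting to $G_K^{ab}\cong\Z$ (generated by the class of $a$) pins down the sign and forces $p=q$, so that everything hinges on the single equality $C_{G_K}(a^p)=C_{G_K}(a)$.

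The gap is in how you close that last step. You invoke the ``standard fact'' that for any nontrivial knot the peripheral subgroup $H_a$ coincides with the centraliser of each of its nontrivial elements, so that $C_{G_K}(a^p)=H_a=C_{G_K}(a)$. That fact is false in the generality of the lemma, which is stated for \emph{all} nontrivial knots. For a composite knot with $r\geq 2$ prime factors, the centraliser of the meridian is $C_{G_K}(a)=\gen a\times C_{G_K'}(a)\cong\Z\times F_r$, which properly contains $H_a\cong\Z^2$; this computation appears in \S\ref{ssec:non-prime knots} of the paper, and it is precisely why the lemma is phrased with $C_{G_K}(a)$ rather than $H_a$ --- your argument would ``prove'' $N_{G_K}[\gen a]=H_a$, which is wrong for composite knots. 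The fact also fails for prime knots once you quantify over all nontrivial peripheral elements: for a torus knot the infinite cyclic centre of $G_K$ lies inside $H_a$, and the centraliser of a nontrivial central element is all of $G_K$. Maximal abelian-ness of $H_a$ (which is true) does not rescue this: a maximal abelian subgroup equals the centraliser of the \emph{whole} subgroup, not the centraliser of each of its elements. Your alternative route via unique roots has the same defect: torsion-free $3$-manifold groups do not have unique roots in general --- in the trefoil group $\gen{x,y\mid x^2=y^3}$ the central element $x^2$ admits every conjugate of $x$ as a square root --- so uniqueness of the $p$-th root of $a^p$ is not a formal consequence of torsion-freeness, and proving it for meridians essentially requires the very centraliser statement at issue. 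What the paper actually uses is the weaker but correct result of Jaco and Shalen (\cite[Corollary 3.7]{jacshaperi}): the centraliser in $G_K$ of an element of a peripheral subgroup coincides with the centraliser of any of its nonzero powers. This gives $C_{G_K}(a^p)=C_{G_K}(a)$ without identifying either group with $H_a$, and with that substitution your argument closes for all nontrivial knots.
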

\begin{proof}
The inclusions $N_{G_K}[\gen{a}]\supseteq N_{G_K}(a)\supseteq C_{G_K}(a)$ are obvious.

By the definition of $\sim$, a certain $t\in G_K$ lies in $N_{G_K}[\gen{a}]$ if and only if $\gen{t^{-1}at}\cap \gen{a}\neq\{1\}$, if and only if $\exists i,j\in\Z\setminus\{0\}$ such that $t^{-1}a^it=a^j$. If we project the last equality onto $G_K/[G_K,G_K]$, which is infinite cyclic generated by the image of $a$, we see that $i=j$. Hence $t$ centralises a power of $a$. But, according to \cite[Corollary 3.7]{jacshaperi}, 
the centraliser in $G_K$ of an element of the peripheral subgroup coincides with the centraliser of any power of the given element.
Summing up, all $t\in N_{G_K}[\gen{a}]$ centralise $a$.
\end{proof}
\begin{rem}\label{rem:GG maximal elements}
This also means that Remark \ref{rem:GG partially ordered} can be strengthened: each element of $\M$ is included in a \emph{unique} maximal element.
\end{rem}

The same idea yields:
\begin{lem}\label{lem:non-cable family}
If $K$ is a nontrivial knot, then $\M[\gen{a}]=\all\gen{a}:=\,\downarrow\!\gen{a}\cup\big\{\{1\}\big\}$.
\end{lem}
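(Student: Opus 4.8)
The plan is to strip the definition of $\M[\gen a]$ down to a single set equality. Since $\F=\{\{1\}\}$, the second summand $\{K\leq N_{G_K}[\gen a]\mid K\in\F\}$ of $\M[\gen a]$ is just $\{\{1\}\}$, which is exactly the $\{\{1\}\}$ appearing in $\all\gen a=\,\downarrow\!\gen a\cup\big\{\{1\}\big\}$. So it suffices to prove that $\{K\leq N_{G_K}[\gen a]\mid K\in\M\setminus\F,\ [K]=[\gen a]\}$ coincides with $\downarrow\!\gen a$. Throughout I would invoke the identification $N_{G_K}[\gen a]=C_{G_K}(a)$ supplied by Lemma \ref{lem:non-cable normalizer}.

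The inclusion $\downarrow\!\gen a\subseteq\M[\gen a]$ is routine. If $K\in\M\setminus\F$ and $K\leq\gen a$, then $K$ sits inside the abelian group $\gen a\leq C_{G_K}(a)=N_{G_K}[\gen a]$, and $K$ itself is a common nontrivial lower bound of $K$ and $\gen a$, so $K\sim\gen a$, i.e.~$[K]=[\gen a]$.

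The substantive direction is $\M[\gen a]\subseteq\,\downarrow\!\gen a$, and this is where I expect the only genuine difficulty: I must upgrade ``$K$ is $\sim$-equivalent to $\gen a$'' to ``$K$ is literally contained in $\gen a$''. Take $K=\gen{g^{-1}a^i g}\in\M\setminus\F$ with $[K]=[\gen a]$. Unwinding $K\sim\gen a$ produces a nontrivial $L\leq K$ with $L\leq\gen a$; writing a generator of $L$ as $a^s=(g^{-1}a^i g)^r=g^{-1}a^{ir}g$ with $r,s\neq 0$, I would project this relation onto the abelianisation $G_K/[G_K,G_K]\cong\Z$, generated by the image of $a$, exactly as in the proof of Lemma \ref{lem:non-cable normalizer}. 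Since $\Z$ is torsion-free this forces $ir=s$, whence $g^{-1}a^{ir}g=a^{ir}$, so $g$ centralises the nontrivial power $a^{ir}$.

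The decisive leverage is then \cite[Corollary 3.7]{jacshaperi}: the centraliser of a nontrivial power of $a$ equals $C_{G_K}(a)$, so $g\in C_{G_K}(a)$ and therefore $g^{-1}a^i g=a^i$. This gives $K=\gen{a^i}\leq\gen a$ with $i\neq 0$, i.e.~$K\in\,\downarrow\!\gen a$, completing the argument. The main obstacle is precisely this passage from equivalence to containment: a priori a conjugate $g^{-1}a^i g$ could commute with $a$ and share a common cyclic subgroup with $\gen a$ without itself lying in $\gen a$, and it is the combination of the abelianisation constraint (matching the exponents) with the centraliser-of-a-power result (forcing $g$, and not merely $g^{-1}a^i g$, into $C_{G_K}(a)$) that rules this out.
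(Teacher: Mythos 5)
Your proof is correct and takes essentially the same route as the paper's: the paper likewise observes that any $M\in\M\setminus\F$ with $[M]=[\gen{a}]$ has the form $\gen{t^{-1}a^kt}$ where, ``by the same argument as in the previous lemma'' (i.e.~projecting the power relation to the abelianisation and invoking \cite[Corollary 3.7]{jacshaperi}), $t$ lies in $C_{G_K}(a)$, whence $M=\gen{a^k}\leq\gen{a}$. Your explicit handling of the easy inclusion $\downarrow\!\gen{a}\subseteq\M[\gen{a}]$ and of the $\F$-summand merely spells out what the paper leaves implicit.
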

\begin{proof}
One has just to observe that the simultaneous conditions $M\in\M\setminus\F$ and $[M]=[\gen{a}]$ 
mean $M=\gen{t^{-1}a^kt}$, for some $k\in\N\setminus\{0\}$ and some $t\in G_K$ which, by the same argument as in the previous lemma, lies in $C_{G_K}(a)$. It follows that $M=\gen{a^k}\leq \gen{a}$.
\end{proof}

\subsection{Prime knots}\label{ssec:prime knots}
A nontrivial knot is \emph{prime} if it cannot be obtained as the \emph{knot product} (also called \emph{knot sum} or \emph{composition}) of nontrivial knots (cf.~\cite[\S 2.C, 7.A]{burzie}).

If $K$ is prime, it follows from J. Simon's results \cite{simon1976} that the centraliser of a meridian is the peripheral subgroup containing it, $C_{G_K}(a)=\gen{a,l}\simeq\Z^2$, where $l$ is a longitude corresponding to $a$. So Diagram (\ref{eq:pushout diagram}) becomes
\begin{equation*}
\xymatrix{
G_K\times_{\gen{a,l}}E\Z^2\ar^\phi[rrr]\ar^{id_{G_K}\times\psi}[d] &&& EG_K\ar[d]\\
G_K\times_{\gen{a,l}}E_{\all\gen{a}}\Z^2\ar[rrr]&&&E_\M G_K.
}
\end{equation*}
The top-left space is a disjoint union of copies of $\R^2$ indexed by $G_K/\gen{a,l}$, glued via $\phi$ to the boundary components of the top-right space. This one is a $3$-manifold, with boundary a disjoint union of planes indexed exactly by $G_K/\gen{a,l}$ (so $\phi$ can clearly be chosen to be a cellular map). As for the bottom-left corner, consider $\R^2$, on which $\gen{a,l}$ acts freely by integer translations, as a model for $E(\Z^2)$, and $\R$, on which $l$ acts by integer translations while $a$ fixes every point, as a model for $E(\Z^2)^{\gen{a}}$. Then the space in that corner can be chosen to be the disjoint union, again indexed by $G_K/\gen{a,l}$, of copies of the mapping cylinder $Cyl(\pi\colon\R^2\to\R)$ of the projection $E(\Z^2)\to E(\Z^2)^{\gen{a}}\simeq E(\Z^2)/\gen{a}$.
This choice guarantees the map $\psi$ to be the inclusion of $\R^2$ into the mapping cylinder, as required in L\"uck and Weiermann's Theorem \ref{thm:lueckweiermann}.
Finally, $E_\M G_K$ is the $3$-dimensional $G_K$-CW-complex obtained by gluing each $Cyl(\pi\colon\R^2\to\R)$, along its $\R^2$ copy, to one boundary component of $EG_K$.

Summing up, for a prime knot the classifying space $E_\M G_K$ is given by the pushout
\begin{equation}\label{eq:prime diagram explicit}
\xymatrix{
\bigsqcup_{G_K/H}\R^2\ar^\phi[rrr]\ar^{id_{G_K}\times\psi}[d] &&& EG_K\ar[d]\\
\bigsqcup_{G_K/H}Cyl(\pi\colon\R^2\to\R)\ar[rrr]&&&E_\M G_K.
}
\end{equation}
This concludes the proof of the Main Theorem.

\subsection{Composite knots}\label{ssec:non-prime knots}
A nontrivial knot is \emph{composite} if it is not prime. Any composite knot $L$ admits an essentially unique decomposition into a knot product of prime factors $L=K_1\sharp\dots\sharp K_r$ (cf.~\cite{schubert1949}). According to \cite{noga1967}, the centraliser $C_{G_K'}(a)$ of the meridian $a$ in the commutator subgroup of $G_K$ is free on the longitudes $l_1,\dots,l_r$ of those prime factors.

But then,
\begin{equation*}
\frac{C_{G_K}(a)}{C_{G_K'}(a)}=\frac{C_{G_K}(a)}{C_{G_K}(a)\cap G_K'}\simeq \frac{C_{G_K}(a)G_K'}{G_K'}\leq \frac{G_K}{G_K'}.
\end{equation*}
Since the last group is infinite cyclic, the first one is so as well. More precisely, since $a\in C_{G_K}(a)\setminus G_K'$, a generator of this group is $aC_{G_K'}(a)$.
This amounts to saying that there is a right-split exact sequence
\begin{equation*}
1\rightarrow C_{G_K'}(a)\rightarrow C_{G_K}(a)\rightleftarrows \gen{a}\rightarrow 1
\end{equation*}
or equivalently, since $a$ is central in $C_{G_K'}(a)$,
\begin{equation*}\label{eq:centralizer non-prime non-cable}
C_{G_K}(a)=\gen{a}\times C_{G_K'}(a)\simeq \Z\times F_r
\end{equation*}
(where $F_r$ denotes the free group of rank $r$).

In this setting, Diagram \eqref{eq:pushout diagram} becomes
\begin{equation*}
\xymatrix{
G_K\times_{\gen{a,l_1,\dots,l_r}}E(\Z\times F_r)\ar^\phi[rrr]\ar^{id_{G_K}\times\psi}[d] &&& EG_K\ar[d]\\
G_K\times_{\gen{a,l_1,\dots,l_r}}E_{\all\gen{a}}(\Z\times F_r)\ar[rrr]&&&E_\M G_K.
}
\end{equation*}
Let $\mathcal{C}(F_r)$ be the geometric realization of the Cayley graph of $F_r$ with respect to the generating set made of the generators of $F_r$ and their inverses. A model for the top-left corner is the disjoint union of copies of $\R\times \mathcal{C}(F_r)$ indexed by $G_K/\gen{a,l_1,\dots,l_n}$. A model for the bottom-left corner is the disjoint union of copies of the mapping cylinder of the projection $\R\times \mathcal{C}(F_r)\rightarrow \mathcal{C}(F_r)$, again indexed by $G_K/\gen{a,l_1,\dots,l_n}$.

\subsection{The Hopf link}\label{ssec:hopf}
With the same machinery we can also build a classifying space for the family of the meridans of the group $G_K=\gen{a,b\mid ab=ba}\simeq\Z^2$ of the Hopf link.
\[\knotholesize{4mm}
\xygraph{
!{0;/r1.5pc/:}
!{\vunder}
!{\vunder-}
[uur]!{\hcap[2]<><{b}}
[l]!{\hcap[-2]<<<{a}}
}
\]

In this link, the meridian of each component serves as the longitude of the other. Hence the classifying space is the double mapping cylinder $Cyl(\R\leftarrow \R^2\rightarrow\R)$ of the projections onto the two components of $EG_K=\R^2$. Here the copy of $\R$ which is the image of the first projection carries the translation action of $b$ and the trivial action of $a$ (i.e., it is $EG_K^{\gen a}$), and conversely for the other copy (which is then $EG_K^{\gen b})$.

\section{Coverings and field extensions}\label{sec:coverings}
We recall a property of prime ideals in Dedekind rings (cf.~e.g.~\cite[\S1.7]{lang_ANT}).
Let $A$ be a Dedekind ring, $K$ its quotient field, $L/K$ a finite separable field extension and $B$ the integral closure of $A$ in $L$. If $P$ is a prime ideal of $A$, then $PB$ is an ideal of $B$ and has a factorization (unique up to permutations)
\[
PB=Q_1^{e_1}\cdots Q_r^{e_r}
\]
into finitely many prime ideals of $B$. The $Q_i$s are precisely the prime ideals of $B$ that lie above $P$ meaning that $Q_i\cap A=P$ (notation: $Q_i|P$).

The natural number $e_i$ is called the \emph{ramification index} of $Q_i$ over $P$. Moreover, for each $i$, let $f_i$ be the (finite) degree of $B/Q_i$ as a field extension of $A/P$ (recall that nonzero prime ideals of Dedekind rings are maximal, hence $B/Q_i$ and $A/P$ are fields).

One can show that
\[
[L:K]=\sum_{i=1}^re_if_i.
\]
If $L/K$ is Galois, then all the $Q_i$s are conjugate to each other, hence all the ramification indices (resp. the residue class degrees) are equal to the same number $e$ (resp. $f$). Thus, in particular, the preceding formula becomes
\begin{equation}\label{eq:efr=degree}
[L:K]=efr.
\end{equation}

A similar formula holds for finite regular coverings of $S^3\setminus V$, where $V$ is a tubular neighbourhood of a knot $K\subseteq S^3$ (cf.~\cite[Chapter 5]{morishita}).
In fact, let $G_K$ be the knot group of $K$ and $H=\gen{a,l}\leq G_K$ be the peripheral subgroup containing the meridian $a$. Consider a normal subgroup $U\trianglelefteq G_K$ of finite index. Then $UH$ is a subgroup of $G_K$ and
\begin{equation*}
\begin{split}
|G_K\colon U|&=|G_K\colon UH|\cdot|UH\colon U|=|G_K\colon UH|\cdot|H\colon H\cap U|=\\
&=|G_K\colon UH|\cdot|H\colon\gen{a}(H\cap U)|\cdot|\gen{a}(H\cap U)\colon H\cap U|\\
&=|G_K\colon UH|\cdot|H\colon\gen{a}(H\cap U)|\cdot|\gen{a}\colon \gen{a}\cap U|.
\end{split}
\end{equation*}

In topological terms, $U$ defines the finite regular covering $\pi^U\colon EG_K/U\to BG_K$ of the closed knot complement $BG_K$. The $G_K$-action on $EG_K$ induces a transitive permutation of the connected components of $\partial EG_K$, which are planes, and the peripheral subgroup $H$ is the (setwise) stabiliser of one such component $P_0$. Hence the subgroup $UH$ is the (setwise) stabiliser of the torus $T_0=\pi^U(P_0)$, which is a connected component of $\partial EG_K/U$. It follows that the cosets in $G_K/UH$ parametrise the connected components of the fibre $(\pi^U)^{-1}(\partial BU)$. So $|G_K\colon UH|$ is analogous to $r$ in Equation \eqref{eq:efr=degree}. In the same spirit, $|\gen{a}\colon \gen{a}\cap U|$ is the least positive integer $k$ such that $a^k\in U$, hence it is analogous to $e$ in Equation \eqref{eq:efr=degree}. Finally, $|H\colon\gen{a}(H\cap U)|$ is analogous to $f$.

The above reasoning also holds for $K$ a link with components $K_1,\dots,K_c$, provided the peripheral subgroups of each component are nondegenerate, that is, isomorphic to $\Z^2$. Such a link will be called \emph{peripherally nontrivial}.
In fact, for $i=1,\dots,c$, let $V_i$ be a tubular neighbourhood of $K_i$, disjoint from the others, and let $H_i=\gen{a_i,l_i}$ be the peripheral subgroup of $G_K$ generated by a meridian $a_i$ and the corresponding longitude $l_i$ of the component $K_i$. Then $G_K$ permutes the connected components of $\partial EG_K/U$ in such a way that two of them are in the same orbit if and only if they are relative to the same component $K_i$ of $K$, that is, if and only if they project to $\partial V_i$.
The stabiliser of a connected component of $\partial EG_K/U$ relative to $K_i$ is $UH_i$. Thus, the connected components of the fibre $(\pi^U)^{-1}(\partial V_i)$ are indexed by $G_K/UH_i$ and it still holds that
\[
\xymatrix@C-8pt@R-10pt{
[G_K\colon U]&=&[G_K\colon UH_i]\ar@{~>}[d]&[H_i\colon\gen{a_i}(H_i\cap U)]\ar@{~>}[d]&[\gen{a_i}\colon \gen{a_i}\cap U]\ar@{~>}[d]\\
&& \vspace{8pt} r_i&f_i & e_i.
}
\]
In other words, the components $K_1,\dots,K_c$ are the analogues of the distinct primes in the ground field of an algebraic number field extension.

It is possible to push this comparison a little forward: in number theory the following holds.
\begin{prop}[\cite{neukirch}, Chapter VI, Corollary 3.8]\label{prop:cor3.8neukirch}
Let $L/F$ be a finite extension of algebraic number fields. If almost all primes of $F$ split completely in $L$, then $L=F$.
\end{prop}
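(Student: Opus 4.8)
The plan is to reduce to the Galois case and then read off the degree of the extension from the density of the completely split primes. First I would invoke the elementary fact that a prime $P$ of $F$ splits completely in $L$ if and only if it splits completely in the Galois closure $M$ of $L/F$: since $M$ is the compositum of the $F$-conjugates of $L$, all of which are $F$-isomorphic, $P$ splits completely in each conjugate as soon as it does so in $L$, and a prime splits completely in a compositum precisely when it does so in every factor. Thus the hypothesis ``almost all primes split completely'' transfers verbatim from $L/F$ to $M/F$, and because $F\subseteq L\subseteq M$ it suffices to prove $M=F$. Replacing $L$ by $M$, I may henceforth assume $L/F$ is Galois, with $\mathrm{Gal}(L/F)$ of order $n=[L:F]$.

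The core of the proof is the assertion that in a Galois extension of degree $n$ the completely split primes have Dirichlet density $1/n$; I would extract this by comparing Dedekind zeta functions. Both $\zeta_F$ and $\zeta_L$ have a simple pole at $s=1$, so that $\log\zeta_F(s)=\log\zeta_L(s)+O(1)=\log\frac{1}{s-1}+O(1)$ as $s\to 1^+$. Writing $\log\zeta_L(s)=\sum_Q N(Q)^{-s}+O(1)$, one notes that the primes $Q$ of $L$ of residue degree $\geq 2$ over $F$ contribute only $O(1)$, while each completely split $P$ of $F$ (and, in the Galois case, these are exactly the $P$ admitting a degree-one prime above them) carries $n$ primes $Q$ with $N(Q)=N(P)$. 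Hence
\[
\log\tfrac{1}{s-1}=\log\zeta_L(s)+O(1)=n\!\sum_{P\ \mathrm{split}}\!N(P)^{-s}+O(1).
\]
Now the hypothesis enters: the non-split primes, being finite in number, contribute $O(1)$, so that $\sum_{P\ \mathrm{split}}N(P)^{-s}=\sum_P N(P)^{-s}+O(1)=\log\frac{1}{s-1}+O(1)$. Substituting into the previous display yields $\log\frac{1}{s-1}=n\log\frac{1}{s-1}+O(1)$ as $s\to 1^+$, and comparing the coefficients of the divergent term $\log\frac{1}{s-1}$ forces $n=1$. Thus $L=F$, and with the reduction step the original field equals $F$.

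The only genuinely substantial ingredient is analytic: the simple pole of the Dedekind zeta function at $s=1$ — equivalently, the density $1/n$ of completely split primes, which is the identity-conjugacy-class case of the Chebotarev density theorem. Everything else (the reduction to the Galois closure, and the fact that a finite set of primes is negligible for the density) is elementary bookkeeping by comparison. The one point I would treat with a little care is the meaning of ``almost all'': here it denotes all but finitely many, which a fortiori has Dirichlet density one, and density one is all that the argument consumes.
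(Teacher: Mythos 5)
Your proof is correct, but it cannot be compared with a proof in the paper, because the paper offers none: the proposition is imported verbatim from Neukirch (Chapter VI, Corollary 3.8) purely to motivate its knot-theoretic analogue, Proposition \ref{prop:analogue of cor3.8}. The comparison should therefore be with Neukirch's own argument, and that argument is genuinely different from yours. In \cite{neukirch} the statement is a corollary of the \emph{first inequality} of global class field theory: one reduces, via the Galois closure and fixed fields of cyclic subgroups, to a cyclic extension $L/F$ of degree $n$, and then shows by weak approximation that if almost all primes split completely (so that the local component of an idèle at a split prime is automatically a norm), the idèle class group satisfies $C_F = N_{L/F}C_L$; this contradicts the norm-index inequality $[C_F : N_{L/F}C_L] \geq n$, proved algebraically via the Herbrand quotient of the idèle class group, unless $n=1$. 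Your route is instead the classical analytic one: reduction to the Galois closure, then comparison of $\log\zeta_F$ and $\log\zeta_L$ as $s\to 1^+$, with the simple pole of the Dedekind zeta function at $s=1$ as the only substantial input. Both are standard and complete. Yours buys a stronger statement --- your computation shows the completely split primes have Dirichlet density exactly $1/[M:F]$ for the Galois closure $M$, so density one (indeed any density greater than $1/2$) already forces $L=F$ --- at the cost of invoking the analytic theory of $\zeta_K$; Neukirch's version fits his deliberately analysis-free development of class field theory. One small imprecision on your side: the parenthetical ``these are exactly the $P$ admitting a degree-one prime above them'' holds only for unramified $P$ (a totally ramified prime has a degree-one prime above it yet does not split completely); since only finitely many primes ramify, this is harmlessly absorbed into your $O(1)$ terms.
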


The next result can be seen as a partial analogue in knot theory.
\begin{prop}\label{prop:analogue of cor3.8}
Let $G_K$ be the group of a peripherally nontrivial link $K$ with components $K_1,...,K_c$ and relative tubular neighbourhoods $V_1,\dots,V_c$. Let $U\id G_K$ a normal subgroup of finite index and $\pi^U\colon EG_K/U\to BG_K$ the finite covering induced by $U$. Then $G_K=U$ if and only if $\forall i=1,\dots,c:|\pi_0((\pi^U)^{-1}(\partial V_i))|=|G_K:U|$.
\end{prop}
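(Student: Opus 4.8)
The plan is to first convert the topological hypothesis into a statement about the position of the peripheral subgroups $H_i$ inside $U$, and then to recognise the nontrivial implication as the fact that the meridians normally generate the knot group.

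For the translation, recall from the index computation preceding the statement that the connected components of $(\pi^U)^{-1}(\partial V_i)$ are indexed by the cosets in $G_K/U H_i$, so that $|\pi_0((\pi^U)^{-1}(\partial V_i))| = [G_K:U H_i] = r_i$, and that $[G_K:U] = r_i f_i e_i$ with $f_i e_i = [H_i:H_i\cap U] = [U H_i:U]$. Hence, for each $i$, the equality $|\pi_0((\pi^U)^{-1}(\partial V_i))| = [G_K:U]$ holds if and only if $f_i e_i = 1$, i.e.\ if and only if $U H_i = U$, i.e.\ if and only if $H_i\le U$. Thus the hypothesis of the proposition is equivalent to the purely algebraic condition that $H_i\le U$ for every $i=1,\dots,c$; in particular each meridian $a_i$ lies in $U$.

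The forward implication is now immediate: if $U=G_K$ then $H_i\le G_K=U$ for all $i$, so all the index equalities hold. For the converse, assume $H_i\le U$ for every $i$. Since $U\id G_K$ is normal and contains each $a_i$, it contains the normal closure $N=\langle\langle a_1,\dots,a_c\rangle\rangle$ of the meridians. I claim $N=G_K$, which forces $U\supseteq N=G_K$ and hence $U=G_K$, as desired.

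The claim $N=G_K$ is the only genuinely topological input, and is the crux of the argument. It follows from the way $S^3$ is reconstructed from the closed link complement by gluing back the solid tori $V_1,\dots,V_c$ along the boundary tori $\partial V_i$: each meridian $a_i$ bounds a meridian disc in $V_i$, so by van Kampen's theorem refilling $V_i$ kills exactly the normal closure of $a_i$ and adds nothing new (the longitude $l_i$ is identified with the core of $V_i$, which already lies in $G_K$). Therefore $\pi_1(S^3)=G_K/N$, and since $\pi_1(S^3)=1$ we conclude $N=G_K$. Everything else is bookkeeping with the multiplicative index chain already established, so this normal-generation fact is the main obstacle.
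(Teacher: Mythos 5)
Your proof is correct, and its first half (translating the count of boundary components into the condition $H_i\le U$ via the index chain $[G_K:U]=[G_K:UH_i]\cdot[UH_i:U]$ and $[UH_i:U]=[H_i:H_i\cap U]$) is exactly the paper's argument. Where you genuinely diverge is in the crux: the paper finishes by quoting the presentation-theoretic fact that \emph{every} generator of a link group is conjugate to one of the chosen meridians $a_i$ (as for Wirtinger generators), so that $a_i\in U$ for all $i$ together with normality of $U$ immediately gives $G_K\subseteq U$; no normal closure or van Kampen argument appears. You instead prove the equivalent statement that the meridians normally generate $G_K$, i.e.\ $\langle\langle a_1,\dots,a_c\rangle\rangle=G_K$, by the Dehn-filling computation: regluing each solid torus $V_i$ kills exactly the normal closure of $a_i$ (the kernel of $\pi_1(\partial V_i)\to\pi_1(V_i)$) and adds no generators since $l_i$ maps onto the core, so $1=\pi_1(S^3)=G_K/\langle\langle a_1,\dots,a_c\rangle\rangle$. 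The trade-off: the paper's route is shorter given the background it has already set up (conjugacy of generators to meridians is stated in its preliminaries), while yours is more self-contained and intrinsically topological --- it does not rely on any particular presentation of $G_K$, only on van Kampen's theorem and the simple connectivity of $S^3$, and it isolates cleanly the one fact (normal generation by meridians) that makes the nontrivial implication work.
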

\begin{proof}
For $i=1,\dots,c$, let $H_i=\gen{a_i,l_i}$ be the peripheral subgroup of $G_K$ generated by the meridian $a_i$ and the longitude $l_i$ of the component $K_i$. It has already been said that $|\pi_0((\pi^U)^{-1}(\partial V_i))|=|G_K:UH_i|$. On the other hand, $|G_K:U|=|G_K:UH_i|\cdot|UH_i:U|$. Hence the equality
\begin{equation}\label{eq:blaaaaah}
 [G_K:U]=|\pi_0((\pi^U)^{-1}(\partial V_i))|
\end{equation}
forces $|H_i:U\cap H_i|=|UH_i:U|=1$, that is, $H_i\subseteq U$. In particular, $a_i\in U$, along with all its conjugates, by normality of $U$. But the generators of $G_K$ are all conjugates to some $a_i$. So, if \eqref{eq:blaaaaah} holds for all $i=1,\dots,c$, then $G_K\subseteq U$. The reverse implication is obvious.
\end{proof}

\subsection{The proof of Proposition B}\label{ssec:propB}
For the remainder of the section, we restrict our attention to prime knots. Let $G_K$ be a prime knot group and $U\id G_K$ be a normal subgroup of finite index. Choose a meridian $a$ and let $H=\gen{a,l}\cong\Z^2$ be the corresponding peripheral subgroup. We want to describe the space $E_\M G_K/U$. By Diagram \eqref{eq:prime diagram explicit}, $E_\M G_K$ is the union $EG_K\cup_{\partial EG_K} (\bigsqcup_{G_K/H}Cyl(\pi\colon\R^2\to\R))$.
The quotient space $EG_K/U$ has finitely many path-connected boundary components $T_1,\dots,T_r$, each homeomorphic to a torus. It remains to understand how each of the extra bits $Cyl(\pi\colon\R^2\to\R)$ is transformed by quotienting over the $U$-action.
$U\cap H$ is a finite-index sublattice of $H$, hence it contains a nontrivial power of $a$. In fact, one can always choose a $\Z$-basis whose first vector is a power of $a$, as follows. Let $\{v,w\}$ be an arbitrary $\Z$-basis of $U\cap H$ and let $e=\min\{n\in\N\setminus\{0\}\mid a^n\in\spn_\Z\{v,w\}\}$. Then, expressing $a^e$ as a $\Z$-linear combination $\alpha v+\beta w$, necesarily $gcd(\alpha, \beta)=1$. As a consequence of Euclid's Algorithm (cf.~\cite[Book 7, Proposition 1]{euclid}), there are $\gamma,\delta\in\Z$ such that 
\[
\mathrm{det}\begin{pmatrix} \alpha & \gamma \\ \beta & \delta \end{pmatrix}=\alpha\delta-\beta\gamma=1.
\]
The condition on the determinant says that $a^e$ and $u=v^\gamma w^\delta$ form a $\Z$-basis for $U\cap H$, that is, $U\cap H=\gen{a^e}\times\gen{u}$.
Now take the extra component $Cyl(\pi\colon EH\to EH^{\gen a})$ corresponding to $H$ (recall that the extra components, as well as the boundary components of $EG_K$, are indexed by the cosets in $G_K/H$). Quotienting first by $\gen{a^e}$ gives an infinite solid cylinder with axis $EH^{\gen a}$ and meridian $a^e$. Quotienting again by $\gen u$ provides a solid torus. The fact that in general $u$ is not orthogonal to $a$ is reflected in the configuration of the lines on the surface of this torus. In particular, if $u=a^pl^q$, the ``old'' longitude $l$ wraps around the axis of the torus with a constant slope of $-p/e$ (and one needs $q$ old longitudes to run a whole lap around the torus).

The quotient map $\pi_U:EG_K\to EG_K/U$ identifies several boundary components of $EG_K$ (one of which, say $P_0$, corresponds to $H$) to a singular boundary component of $EG_K/U$. The preceding argument accounts for a solid torus attached to the boundary component of $EG_K/U$ which is the image of $P_0$.
Clearly the same reasoning works for the other boundary components of $EG_K/U$.
Summing up, the space $E_\M G_K/U$ is obtained by gluing a closed solid torus $V_k\simeq Cyl(\pi\colon\R^2\to\R)/U$ to each boundary component of $EG_K/U$, in such a way that $T_k=\partial V_k$ ($k=1,\dots,r$).

It follows that the fundamental group of $E_\M G_K/U$ can be computed via iterated applications of Seifert and Van Kampen's Theorem. Suppose a presentation of $U$ is given, with set of generators $gen(U)$ and set of relators $rel(U)$.
Let $\pi_1(T_k)=\gen{m_k,l_k\mid m_kl_km_k^{-1}l_k^{-1}}\simeq \Z^2$ and $\pi_1(V_k)=\gen{\ell_k\mid\; }\simeq \Z$. Finally, let $i_k\colon\pi_1(T_k)\to\pi_1(BU)$ and $j_k\colon\pi_1(T_k)\to\pi_1(V_k)$ be the group homomorphisms induced by the inclusion maps. Then,
\[
\pi_1(E_\M G_K/U)= (\dots(\pi_1(BU)\ast_{\pi_1(T_1)}\pi_1(V_1))\ast\dots) \ast_{\pi_1(T_r)}\pi_1(V_r)
\]
(where one has obviously to consider small open neighbourhoods of $BU$ and of each $V_k$ that deformation-retract onto $BU$ and $V_k$ respectively, as well as their intersection deformation-retracts onto $T_k$; this is always possible thanks to tameness).
Hence, $\pi_1(E_\M G_K/U)$ admits the presentation
\[
\begin{aligned}
&\gen{gen(U),\ell_1,\dots,\ell_r\mid rel(U),\; i_k(m_k)j_k^{-1}(m_k),\; i_k(l_k)j_k^{-1}(l_k),\; k=1,\dots,r}\\
=&\gen{gen(U),\ell_1,\dots,\ell_r\mid rel(U),\; i_k(m_k), \;\ell_kj_k^{-1}(l_k),\; k=1,\dots,r}\\
=&\gen{gen(U)\mid rel(U),\; i_k(m_k),\; k=1,\dots,r}
\end{aligned}
\]
(the last equality follows from Tietze's Theorem on equivalence of presentations, cf.~\cite[\S IV.3]{crofox}).
Therefore, there is a short exact sequence of groups
\begin{equation}\label{eq:ses of groups}
\xymatrix{
1\ar[r]&\gen{\gen{i_1(m_1),\dots,i_r(m_r)}}\ar[r]&U\ar[r]&\pi_1(E_\M G_K/U)\ar[r]&1
}
\end{equation}
The elements $i_k(m_k)$ are representatives of the $U$-conjugacy classes of the powers of generators of $G_K$ that lie in $U$. We will denote 
\[
 M_U=\gen{\gen{i_1(m_1),\dots,i_r(m_r)}}
\]
the normal subgroup generated by the $i_k(m_k)$'s.

\subsection{Consequences of Proposition B}\label{sec:consequences}
Given a prime knot group $G_K$, every normal subgroup of finite index $U\id G_K$ induces a finite regular unbranched covering $\pi^U\colon EG_K/U\to EG_K/G_K=BG_K$ and an associated branched covering $\widehat{\pi^U}\colon E_\M G_K/U\to E_\M G_K/G_K=S^3$. Both maps are the quotient by the $G_K/U$-action induced from $G_K$ on the respective total spaces. We digress to collect some properties of these total spaces, keeping the notation of Section \ref{ssec:propB}.

By the construction of $E_\M G_K$ (cf.~Diagram \eqref{eq:prime diagram explicit}),
\begin{equation}\label{eq:branched included unbranched}
EG_K/U\subseteq E_\M G_K/U.
\end{equation}

\begin{lem}\label{lem:proper disc action}
The group $U/M_U$ acts on $E_\M G_K/M_U$ with a covering space action (in the sense of \cite[Page 72]{hatcher}). The quotient space is $E_\M G_K/U$.
\end{lem}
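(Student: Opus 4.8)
The plan is to check directly the two requirements of a covering space action in the sense of \cite[Page 72]{hatcher}: that the action is free, and that every point has a neighbourhood whose $U/M_U$-translates are pairwise disjoint. Throughout I write $X=E_\M G_K$, a contractible $G_K$-CW-complex, and I use two facts. First, since $X$ is a model of $E_\M G_K$, every point-stabiliser lies in $\M$: a point $x$ satisfies $x\in X^{\mathrm{Stab}_{G_K}(x)}$, so this fixed-point set is nonempty and hence $\mathrm{Stab}_{G_K}(x)\in\M$ by the defining property of the classifying space. Second, intersecting such a stabiliser with $U$ yields a subgroup of some $\gen{g^{-1}ag}$ consisting of powers of a meridian lying in $U$; by the very definition of $M_U$ this gives $\mathrm{Stab}_U(x)=U\cap\mathrm{Stab}_{G_K}(x)\subseteq M_U$ for every $x$.

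First I would set up the action and identify the quotient. As $M_U\trianglelefteq U$, the restriction to $U$ of the $G_K$-action on $X$ descends to a well-defined $U/M_U$-action on $X/M_U$ by $(uM_U)\cdot(M_U x)=M_U(ux)$, normality being exactly what is needed for well-definedness. Quotienting $X$ first by $M_U$ and then by $U/M_U$ coincides with quotienting by $U$ in one step, so $(X/M_U)\big/(U/M_U)=X/U=E_\M G_K/U$, which is the second assertion. Freeness follows at once from the two facts above: if $uM_U$ fixes $M_U x$ then $ux=mx$ for some $m\in M_U$, hence $m^{-1}u\in\mathrm{Stab}_U(x)\subseteq M_U$ and therefore $u\in M_U$.

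The essential step, and the main obstacle, is the local disjointness condition. One cannot simply pull back a covering structure from $X$, because $M_U$ does \emph{not} act freely on $X$: the meridian powers fix the axes $EH^{\gen a}$, so $X\to X/M_U$ is genuinely a branched covering. Instead I would argue in $X/M_U$ itself. Using the local structure of a $G_K$-CW-complex, pick for each $x$ a $\mathrm{Stab}_{G_K}(x)$-invariant open neighbourhood $W$ enjoying the tube property $gW\cap W\neq\emptyset\Rightarrow g\in\mathrm{Stab}_{G_K}(x)$, and let $\overline W$ be its image in $X/M_U$. If $(uM_U)\overline W\cap\overline W\neq\emptyset$ then $(mu)W\cap W\neq\emptyset$ for some $m\in M_U$, so $mu\in\mathrm{Stab}_{G_K}(x)\cap U=\mathrm{Stab}_U(x)\subseteq M_U$ and $uM_U$ is trivial; equivalently the translates $\gamma\overline W$ with $\gamma\in U/M_U$ are pairwise disjoint. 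This is precisely Hatcher's condition, so the action is a covering space action and $X/M_U\to E_\M G_K/U$ is the associated regular covering, with deck group $U/M_U\cong\pi_1(E_\M G_K/U)$ from Proposition B.

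The point that really needs care is the tube property at the branch loci, i.e.\ at points $x$ on an axis where $\mathrm{Stab}_{G_K}(x)=\gen{a}$ is infinite cyclic. There I would appeal to the explicit model: near such an $x$ the mapping cylinder looks like $\R\times\mathrm{Cone}(\R)$, with $l$ translating the first factor and $a$ acting on the cone fibre while fixing the vertex. A neighbourhood of the vertex of the form (small interval in the axis)$\times$(cone-vertex neighbourhood) is then $\gen{a}$-invariant, while any $g\notin\gen{a}$ either translates it off along the axis or carries it into the cylinder over a different boundary plane; in both cases a small enough such $W$ has disjoint $g$-image, establishing the tube property and completing the argument.
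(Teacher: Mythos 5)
Your proposal is correct and takes essentially the same route as the paper: both verify Hatcher's covering-space-action condition by a local analysis on $E_\M G_K/M_U$, splitting into points with trivial stabiliser and points on the branch axes, and both hinge on the same key observation that $\stab_{G_K}(x)\cap U$ consists of powers of a meridian lying in $U$ and is therefore contained in $M_U$, so the only potentially problematic translates become trivial in the quotient. The only cosmetic differences are that the paper argues by contradiction and reduces the free-point case to $x\in EG_K$ by enlarging $EG_K$ to a $G_K$-homeomorphic subcomplex, whereas you invoke the tube property of $G_K$-CW-complexes uniformly and then verify it explicitly at the branch loci via the mapping-cylinder model.
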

\begin{proof}
The action is induced by the $G_K$-action on $E_\M G_K$, as follows:
\[
uM_U\bullet M_Ux=M_Uux
\]
for $u\in U, x\in E_\M G_K$.
Denote $\mathrm{Fix}_\M G_K=\sqcup_{C\in\M}(E_\M G_K)^C$ and $\overline x=M_Ux\in E_{\M}G_K/M_U$ for $x\in E_{\M}G_K$. If $\stab_{G_K}(x)=1$, i.e., $x\!\in\! E_{\M}{G_K}\setminus(\mathrm{Fix}_\M {G_K})$, then by the construction of $E_\M {G_K}$ we can enlarge $E{G_K}\subseteq E_\M {G_K}$ to a ${G_K}$-CW-complex $Y\subseteq E_\M {G_K}$ which is ${G_K}$-homeomorphic to $E{G_K}$ and contains $x$ in its interior. This amounts to cutting away a small open neighbourhood of $\mathrm{Fix}_\M {G_K}$. Hence it is sufficient to consider only the two cases, $x\in E{G_K}$ or $\stab_{G_K}(x)\neq 1$.

Suppose by contradiction that there exists $\overline x=M_Ux\in E_{\M}{G_K}/M_U$ such that, denoting with $B$ a generic neighbourhood of $\overline x$,
\begin{equation}\label{eq:not prop disc}
\forall B : \exists u\in U\setminus M_U: B\cap uM_UB\neq\emptyset.
\end{equation}
Letting $\widehat{\pi_{M_U}}\colon E_\M {G_K}\to E_\M {G_K}/M_U$ be the canonical quotient map, one has $\widehat{\pi_{M_U}}^{-1}(B)$ $=\bigcup_{m\in M_U}mD$ and $\widehat{\pi_{M_U}}^{-1}(uM_UB)=\bigcup_{m\in M_U}muD$, for a suitable neighbourhood $D$ of $x$.

In the case $x\in E{G_K}$, the neighbourhood $B$ can be chosen so small that $D$ be disjoint from all its translates $uD, u\in U\setminus\{1\}$ because the ${G_K}$-action on $E{G_K}$ is a covering space action by definition. This contradicts \eqref{eq:not prop disc}.
As regards the case $\stab_{G_K}(x)\neq 1$, by construction the stabilizer is infinite cyclic and generated by a meridian, say $\stab_{G_K}(x)=\gen{a}$. The obstruction in order to apply the same reasoning as before is that $D$ cannot be chosen to be disjoint from all of its translates. But, as long as $B$ is small enough, the only problematic translates of $D$ are the translates by powers of $a$, which are identified with $1$ when passing to the quotient by $M_U$.
The claim on the quotient space is obvious.
\end{proof}
\begin{prop}
\label{prop:easter beer}
The quotient map $\widehat{\pi^{M_U}_U}\colon E_{\M}{G_K}/M_U\to E_{\M}{G_K}/U$ associated with the action of Lemma \ref{lem:proper disc action} is the universal covering map of $E_\M {G_K}/U$.
\end{prop}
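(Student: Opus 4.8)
The plan is to read off the simple connectivity of the total space from Proposition~B, so that it only remains to invoke the covering-space machinery. By Lemma~\ref{lem:proper disc action} the map $q:=\widehat{\pi^{M_U}_U}\colon E_\M G_K/M_U\to E_\M G_K/U$ is a covering map coming from the covering space action of $\Gamma:=U/M_U$, and its total space $E_\M G_K/M_U$ is path-connected, being a quotient of the contractible space $E_\M G_K$ (contractible because $\{1\}\in\M$, so $(E_\M G_K)^{\{1\}}=E_\M G_K$ must be contractible). A covering space action yields a normal covering with deck group $\Gamma$, so \cite[Proposition~1.40]{hatcher} supplies a surjective monodromy homomorphism
\[
\sigma\colon\pi_1(E_\M G_K/U)\longrightarrow\Gamma=U/M_U,\qquad\ker\sigma=q_\ast\pi_1(E_\M G_K/M_U).
\]
It therefore suffices to prove that $\sigma$ is injective: this forces $q_\ast\pi_1(E_\M G_K/M_U)=1$, hence $\pi_1(E_\M G_K/M_U)=1$ since $q_\ast$ is injective, which together with Lemma~\ref{lem:proper disc action} identifies $q$ as the universal covering.

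To analyse $\sigma$ I would restrict the covering over the distinguished subspace $EG_K/U\subseteq E_\M G_K/U$ of \eqref{eq:branched included unbranched}. Since $EG_K$ is a $G_K$-invariant subcomplex of $E_\M G_K$, one has $U\cdot EG_K=EG_K$ and hence $q^{-1}(EG_K/U)=EG_K/M_U$, so $q$ restricts to a covering $EG_K/M_U\to EG_K/U$. Now $EG_K/M_U$ and $EG_K/U$ are the connected coverings of $BG_K$ associated with $M_U\leq U\leq G_K$; as $M_U\id U$, the restricted map is the intermediate normal covering corresponding to $M_U\id U=\pi_1(EG_K/U)$, and its monodromy is exactly the canonical projection $U\twoheadrightarrow U/M_U$ (cf.~\cite[\S1.3]{hatcher}).

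The two pieces are tied together by Proposition~B. Writing $j\colon EG_K/U\hookrightarrow E_\M G_K/U$ for the inclusion, Proposition~B says precisely that $j_\ast\colon\pi_1(EG_K/U)=U\to\pi_1(E_\M G_K/U)$ is onto with kernel $M_U$. Any loop in $EG_K/U$ lifts through $q$ to a path that remains inside $q^{-1}(EG_K/U)=EG_K/M_U$, so its $\sigma$-monodromy agrees with its monodromy for the restricted covering; that is, $\sigma\circ j_\ast$ is the projection $U\twoheadrightarrow U/M_U$. Because $j_\ast$ is surjective, every class of $\pi_1(E_\M G_K/U)$ has the form $j_\ast(u)$, and $\sigma(j_\ast u)=0$ forces $u\in M_U=\ker j_\ast$, i.e.\ $j_\ast u=0$. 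Hence $\ker\sigma=1$, as required.

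The step I expect to be the main obstacle is this compatibility assertion $\sigma\circ j_\ast=(\text{projection }U\to U/M_U)$, namely that the monodromy of the branched covering $q$ restricts correctly to that of the unbranched covering $EG_K/M_U\to EG_K/U$. It rests on two equivariant bookkeeping facts to be checked against the explicit model of Diagram~\eqref{eq:prime diagram explicit}: that $EG_K$ sits inside $E_\M G_K$ as a $G_K$-subcomplex (whence $q^{-1}(EG_K/U)=EG_K/M_U$, and lifts of loops in $EG_K/U$ never stray into the attached solid tori), and that the deck-group identifications of $q$ and of its restriction are the same copy of $U/M_U$. Once these are secured the conclusion is immediate, and I would remark that it is consistent with the exact sequence of \cite[Proposition~1.40]{hatcher}, which here degenerates to the isomorphism $\pi_1(E_\M G_K/U)\cong U/M_U$ already recorded in Proposition~B.
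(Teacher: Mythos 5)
Your proof is correct, but it takes a genuinely different route from the paper's. The paper's own argument is group-theoretic: it identifies the deck group of $\widehat{\pi^{M_U}_U}$ with $U/M_U$ (Lemma \ref{lem:proper disc action} plus \cite[Proposition 1.40]{hatcher}), invokes Proposition B to identify $\pi_1(E_\M G_K/U)$ with the \emph{same} group $U/M_U$, and then concludes that $(\widehat{\pi^{M_U}_U})_\ast\pi_1(E_\M G_K/M_U)$ must vanish because $U/M_U$ is Hopfian --- which in turn rests on $U/M_U$ being finitely generated and residually finite, the latter by the (deep) residual finiteness of compact $3$-manifold groups cited as \cite{3mfdgp}. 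You instead prove injectivity of the monodromy $\sigma$ directly, by restricting the covering over the unbranched part $EG_K/U$ of \eqref{eq:branched included unbranched} and matching $\sigma\circ j_\ast$ with the canonical projection $U\to U/M_U$; the kernel computation then follows from $\ker j_\ast=M_U$. What your approach buys is elementarity: it needs no residual finiteness and no Malcev/Hopf property, only covering-space bookkeeping. What it costs is a slightly stronger reading of Proposition B: you need not just the abstract isomorphism $\pi_1(E_\M G_K/U)\cong U/M_U$ but the fact that the surjection $U\to\pi_1(E_\M G_K/U)$ is the inclusion-induced map $j_\ast$ with kernel exactly $M_U$. This is legitimate --- it is precisely what the Seifert--Van Kampen argument in \S\ref{ssec:propB} establishes, since the maps in the pushout presentation \eqref{eq:ses of groups} are induced by inclusions --- but it should be said explicitly, since the bare statement of Proposition B does not name the map. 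With that caveat addressed, your compatibility step (lifts of loops in $EG_K/U$ stay in $q^{-1}(EG_K/U)=EG_K/M_U$ by uniqueness of path lifting, and the two deck groups are the same copy of $U/M_U$ because both actions are induced by the $U$-action on $E_\M G_K$) is sound, and the degeneration of the sequence in \cite[Proposition 1.40]{hatcher} to the isomorphism of Proposition B is exactly the simple connectivity one wants.
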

\begin{proof}
The group $U$ is finitely generated, being a finite-index subgroup of the knot group ${G_K}$. Hence $U/M_U$ is finitely generated too. Moreover, $U/M_U$ is residually finite, being the fundamental group of the compact $3$-manifold $E_\M {G_K}/U$ (cf.~Proposition B and \cite{3mfdgp}). Thus, $U/M_U$ is Hopfian (cf.~e.g.~\cite[\S~4.1]{neumann}), i.e., it is not isomorphic to any of its proper subgroups.

But $U/M_U$ is isomorphic to the group $deck(\widehat{\pi^{M_U}_U})$ of deck transformations of the covering $\widehat{\pi^{M_U}_U}\colon E_{\M}{G_K}/M_U\to E_{\M}{G_K}/U$ (Lemma \ref{lem:proper disc action} and \cite[Proposition 1.40]{hatcher}). On the other hand,
\[
deck(\widehat{\pi^{M_U}_U})\cong\frac{\pi_1(E_\M {G_K}/U)}{(\widehat{\pi^{M_U}_U})_\ast\left(\pi_1(E_\M {G_K}/M_U)\right)},
\]
where $(\widehat{\pi^{M_U}_U})_\ast$ is the homomorphism induced by $\widehat{\pi^{M_U}_U}$ on fundamental groups (cf.~\cite[Proposition 1.39]{hatcher}).

Together with Proposition B, this means
\[
{U}/{M_U}\cong \frac{{U}/{M_U}}{(\widehat{\pi^{M_U}_U})_\ast\left(\pi_1(E_\M {G_K}/M_U)\right)}.
\]
Then, by Hopf property, $(\widehat{\pi^{M_U}_U})_\ast\left(\pi_1(E_\M {G_K}/M_U)\right)=1$, whence the simple connectedness of $E_\M {G_K}/M_U$.
\end{proof}

\begin{prop}
Let ${G_K}$ be a prime knot group and let $U\id {G_K}$ be a normal subgroup of finite index. Then the following are equivalent.
\begin{enumerate}
\item $U=M_U$;
\item The canonical projection $\widehat{\pi^{M_U}_U}\colon E_\M {G_K}/M_U\to E_\M {G_K}/U$ is a trivial covering;
\item The canonical projection ${\pi^{M_U}_U}\colon E {G_K}/M_U\to E {G_K}/U$ is a trivial covering;
\item $\pi_1(E_\M {G_K}/U)=1$;
\item $E_\M {G_K}/U\cong S^3$;
\end{enumerate}
\end{prop}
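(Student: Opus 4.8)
The plan is to route all five statements through the short exact sequence of Proposition B, which identifies $\pi_1(E_\M G_K/U)$ with the quotient $U/M_U$, and to reserve the only genuinely geometric input for the very last implication. First I would settle the two essentially algebraic equivalences $(1)\Leftrightarrow(4)$ and $(1)\Leftrightarrow(3)$. For $(1)\Leftrightarrow(4)$, Proposition B gives $\pi_1(E_\M G_K/U)\cong U/M_U$, so $\pi_1(E_\M G_K/U)=1$ holds exactly when $U=M_U$. For $(1)\Leftrightarrow(3)$, recall that $EG_K$ is the simply connected universal cover of $BG_K$; hence ${\pi^{M_U}_U}\colon EG_K/M_U\to EG_K/U$ is the covering associated with the subgroup inclusion $M_U\leq U$, and it is a connected covering. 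A connected covering is trivial if and only if it is one-sheeted, i.e. $[U:M_U]=1$, which is precisely $U=M_U$.

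Next I would dispatch $(2)\Leftrightarrow(4)$ using Proposition \ref{prop:easter beer}, which asserts that $\widehat{\pi^{M_U}_U}\colon E_\M G_K/M_U\to E_\M G_K/U$ is the universal covering map. Since the universal cover of a connected space is a homeomorphism (a trivial covering) if and only if the base is simply connected, this yields $(2)\Leftrightarrow(4)$ at once. The implication $(5)\Rightarrow(4)$ is immediate, as $\pi_1(S^3)=1$. At this point all that remains in order to close the cycle is the single implication $(4)\Rightarrow(5)$.

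The implication $(4)\Rightarrow(5)$ is the only substantial step, and it is where the geometry enters. Here I would first record that $E_\M G_K/U$ is a closed orientable $3$-manifold: since $U$ has finite index, $EG_K/U$ is a compact $3$-manifold whose boundary is a finite disjoint union of tori, and by the description in \S\ref{ssec:propB} the space $E_\M G_K/U$ is obtained by gluing a solid torus to each such boundary torus, so it is closed; orientability is automatic from $\pi_1(E_\M G_K/U)=1$ (it can also be read off the branched-covering structure over $S^3$). Assuming (4), this closed $3$-manifold is simply connected, and the decisive ingredient is then the Poincar\'e Conjecture, proved by Perelman: a simply connected closed $3$-manifold is homeomorphic to $S^3$. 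This gives $E_\M G_K/U\cong S^3$ and completes the chain of equivalences. I expect the only real obstacle to be the appeal to Perelman's theorem, which is what makes $(4)\Rightarrow(5)$ possible at all; the bookkeeping that $E_\M G_K/U$ is genuinely closed and carries a well-defined orientation is the sole point requiring minor care.
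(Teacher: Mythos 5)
Your proof is correct, and it links the five statements by a genuinely different implication graph than the paper's. The paper closes the cycle $(1)\Rightarrow(2)\Rightarrow(3)\Rightarrow(1)$: the first implication is immediate, $(2)\Rightarrow(3)$ follows by restricting the branched covering to the unbranched one via the inclusion \eqref{eq:branched included unbranched}, and $(3)\Rightarrow(1)$ is the Galois correspondence; it then appends $(1)\Leftrightarrow(4)$ via the exact sequence \eqref{eq:ses of groups} and $(4)\Leftrightarrow(5)$ via Perelman's theorem, just as you do. You instead arrange everything as a star around $(1)$ and $(4)$: you prove $(1)\Leftrightarrow(3)$ directly by covering-space theory (a connected covering is trivial iff one-sheeted), and you handle $(2)$ through Proposition \ref{prop:easter beer} --- the universal covering is trivial iff the base is simply connected --- a result the paper only mentions parenthetically as an alternative route to $(1)\Leftrightarrow(4)$. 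The trade-off: the paper's $(2)\Rightarrow(3)$ needs only the elementary geometric fact that the unbranched covering sits inside the branched one, whereas your route makes $(2)$ depend on the stronger Proposition \ref{prop:easter beer} but keeps each equivalence self-contained and never uses \eqref{eq:branched included unbranched} at all. You also spell out why $E_\M G_K/U$ is a closed orientable $3$-manifold before invoking Perelman (solid tori glued to the toroidal boundary of the compact manifold $EG_K/U$, with orientability forced by simple connectedness), a point the paper's one-line appeal to the Poincar\'e Conjecture leaves implicit; that is a small but genuine gain in completeness.
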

\begin{proof}
Clearly, (1)$\Rightarrow$(2). The implication (2)$\Rightarrow$(3) holds by restriction, in view of the inclusion \eqref{eq:branched included unbranched}. Moreover, (3)$\Rightarrow$(1) by the Galois correspondence between coverings of $B{G_K}$ and subgroups of ${G_K}$ (cf.~\cite[Theorem 1.38]{hatcher}). The implication (1)$\Leftrightarrow$(4) is given by the exact sequence \eqref{eq:ses of groups} (but it also follows from Proposition \ref{prop:easter beer}). Finally, (4)$\Leftrightarrow$(5) is Poincar\'e Conjecture (now Hamilton-Perel'man's Theorem, as G.~Perel'man himself would probably like to call it).
\end{proof}
\begin{ex}[finite cyclic coverings and Fox completions]
Let $K\subseteq S^3$ be a knot, with tubular neighbourhood $V$, let $X=S^3\setminus V$ and let $G=G_K$ be the knot group. The abelianisation $G/[G,G]$ is infinite cyclic generated by the class of a meridian $a\in G$. The winding number isomorphism (cf.~\cite[Example 2.12]{morishita} $W\colon G/[G,G]\to\Z$ maps that class to $1$. Let $\mathrm{lk(\slot,\slot)}$ be the \emph{linking number} of two curves in $S^3$ (cf.~\cite[\S 1.2]{hillman} or \cite[\S 1.1, \S 4.1]{morishita}). The kernel $U_n$ of the composition
\[
\xymatrix@C+12pt{
G\ar^-{\mathrm{lk(\slot,K)}}[r] & \Z\ar^-{\mathrm{mod}\, n}[r] & \Z/n\Z
}
\]
is a normal subgroup of finite index in $G$.
Since 
\[
\frac{G/[G,G]}{U_n/[G,G]}\cong \frac{G}{U_n}\cong \Z/n\Z
\]
we have a description of $U_n$ as the extension
\[
1\to[G,G]\to U_n\to\gen{a^n}\to 1.
\]
with $a$ a meridian and $\gen{a^n}\cong\Z$.

The covering $\rho_n\colon X_n\to X$ associated to $U_n$, i.e., the unique covering of $X$ with group of deck transformations isomorphic to $\Z/n\Z$, is called the \emph{$n$-fold cyclic covering} of $X$ (or of $K$).
By construction, the covering space $X_n$ has only one boundary component. Letting $H=\gen{a,l}$ be the peripheral subgroup associated to the meridian $a$, $U_n\cap H=\gen{a^n}\times\gen l$. Referring to the notation used at the beginning of this section, this means $e=n=|G:U_n|$, $f=r=1$.

Gluing one solid torus to $X_n$ along its meridian $a^n$, another one to $X$ along $a$, and extending in the obvious way the projection map $\rho_n$ to the manifolds so obtained, one gets the \emph{$n$-fold cyclic branched covering} of $K$
\[
\widehat{\rho_n}\colon \widehat{X_n}\to S^3.
\]
Somebody calls this construction the \emph{Fox completion} of $\rho_n$ (cf. \cite[Example 2.14]{morishita}).

Therefore, the total space of the Fox completion of the $n$-fold cyclic covering of $K$ is a model for the space $E_\M G/U_n$. According to Proposition B, its fundamental group is 
\[
\pi_1(E_\M G/U_n)\cong U_n/\gen{\gen{a^n}}.
\]

To be more concrete, let 
\[
G=\gen{a,b,c\mid ab=bc=ca}=\gen{a,b,c\mid aba=bab}
\]
be the trefoil knot group. Following \cite[Example 2.1]{silwil}, we can describe the commutator subgroup as
$
[G,G]=\gen{a^j(a^{-1}b)a^{-j}\mid j\in\Z}
$
. Using Tietze moves, we can reduce the presentation to
\[
[G,G]=\gen{a^{-1}b,a^{-2}ba}.
\]
This is in accordance with a theorem of Stallings (cf. \cite[\S 5.A]{burzie}), saying that the commutator subgroup of a fibred knot group is free on $2g$ generators, where $g$ is the genus of the knot.

MAGMA Online Calculator
(http://magma.maths.usyd.edu.au/calc/) provided with the code
\begin{verbatim}
G<a,b>:=Group<a,b|a*b*a=b*a*b>;
H:=sub<G|a^-1*b,a^-2*b*a, a^2>;
U_2:=H^G;                       // normal closure
U2:=Rewrite(G,U_2);             // for a better presentation
K:=sub<G|a^2>;
M_2:=K^G;
MU2:=Rewrite(G,M_2);
Q<x,y,z>:=U2/MU2;
Q;
\end{verbatim}
describes $Q=U_2/\gen{\gen{a^2}}$ as a cyclic group of order $3$.

Summing up, we have obtained a regular branched covering of $S^3$, branched over the trefoil knot, with fundamental group $\pi_1(E_\M G/U_2)\cong \Z/3\Z$. The analogous code with the obvious modifications also gives (after playing a little with derived series)
\begin{eqnarray*}
\pi_1(E_\M G/U_3)&\cong & Q_8;\\
\pi_1(E_\M G/U_4)&\cong & SL(2,3);\\
\pi_1(E_\M G/U_5)&\cong & SL(2,5).
\end{eqnarray*}
Here $Q_8$ denotes the group of the units of the quaternions.
For $n=6$, the code gives a runtime error in computing $M_6$, saying that the coset table is not closed.
\end{ex}

\subsection{A Tits alternative for $M_U$}
It is quite difficult to give an explicit description of $M_U$. However, there are some restrictions on its structure. Since knot groups are coherent (as a consequence of Scott's Theorem, cf.~\cite{scott}), $M_U$ is either infinitely generated or finitely presented. In the former case, the index $|U:M_U|$ has clearly to be infinite. In the latter case, one can obtain the following Tits-like alternative: either the index $|U:M_U|$ is finite or $M_U$ is free. In fact, the groups of nontrivial knots have cohomological dimension $2$. Since $U$ has finite index by hypothesis, it has cohomological dimension $2$ as well (cf.~\cite[\S VIII.2]{brown}). If $|U:M_U|$ is finite, then also $M_U$ has cohomological dimension $2$, hence it is not free. On the other side, if $|U:M_U|=\infty$, then by \cite[Corollary 2]{bieri1978} $M_U$ has cohomological dimension $1$, hence it is free by Stallings's Theorem (cf.~\cite{stallings}).

\section{Dualities}\label{sec:duality}
We return to the general setting of Section \ref{sec:construction}. Let $K\subseteq S^3$ be any nontrivial knot, with knot group $G_K$ and a tubular neighbourhood $V_K$. Set $X_K=S^3\setminus V_K$. 
Let $a\in G_K$ be a meridian, $l$ the associated longitude and $H=\gen{a,l}$ the corresponding peripheral subgroup.

\subsection{The proof of Proposition C}
If $U\id G_K$ is a normal subgroup of finite index, then $X_U:=EG_K/U$ is a $3$-manifold with toroidal boundary $T_U$. The number of connected components (tori) of $T_U$ is precisely $G_K/UH$, since $H$ is the (setwise) stabiliser of a boundary component of $EG_K$ (see the beginning of Section \ref{sec:coverings}). The CW-pair $(X_U,T_U)$ produces the long exact sequence in homology
\begin{equation}\label{eq:LES basic}
\xymatrix@R-10pt{
0\ar[r] & H_3(X_U,T_U,\Z)\ar[r] & H_2(T_U,\Z)\ar[r] & H_2(X_U,\Z)\ar[lld] & \\
 & H_2(X_U,T_U,\Z)\ar[r] & H_1(T_U,\Z)\ar[r] & H_1(X_U,\Z)\ar[lld] & \\
 & H_1(X_U,T_U,\Z)\ar[r] & H_0(T_U,\Z)\ar[r] & H_0(X_U,\Z) \ar[r] & 0.
}
\end{equation}

Now Poincar\'e-Lefschetz duality for manifolds with boundary (cf.~\cite[Theorem 3.43]{hatcher} gives isomorphisms $H_i(X_U,T_U,\Z)\cong H^{3-i}(X_U,\Z)$. Moreover, $X_U$ is an Eilenberg-Maclane space $K(U,1)$ for $U$ (cf.~\cite[\S 1.B]{hatcher}), since it is a covering space of $X_K=K(G_K,1)$ (cf.~\S\ref{ssec:preliminaries}). It follows that $H_i(X_U,\Z)\cong H_i(U,\Z)$ and $H^i(X_U,\Z)\cong H^i(U,\Z)$ (cf.~\cite[\S II.4, III.1]{brown}). Summing up, \eqref{eq:LES basic}  the sequence \eqref{eq:poitou-tate U} claimed in Proposition C.

\subsection{The proof of Proposition D}
From now on, coefficients in homology and cohomology are understood to be the integers, unless otherwise specified.

Assume now that $K$ is prime. Recall that $U$ defines the branched covering $\widehat{\pi^U}\colon E_{\M}G_K/U\to E_{\M}G_K/G_K$, as in \S \ref{sec:consequences}. In general $U$ is the group of the link $(\widehat\pi^U)^{-1}(K)$ in the $3$-manifold $E_\M {G_K}/U$. Suppose that $U$ be a knot group, that is, $UH={G_K}$.
Then, applying the abelianisation functor to the sequence \eqref{eq:ses of groups} one gets the exact sequence
\[
\xymatrix{
\gen m\ar^-{\iota}[r] & H_1(U)\ar[r] & H_1(E_\M {G_K}/U)\ar[r] & 0,
}
\]
for $m$ a meridian of $U$. We identify $H\cong\Z^2\cong H^1(H)\cong\gen{m,l}$ and extend $\iota$ to
\begin{eqnarray*}
\widetilde\iota:H_1(H)\to H_1(U),\\
\widetilde\iota(m)=\iota(m),\quad\widetilde\iota(l)=0.
\end{eqnarray*}
This gives an exact sequence
\begin{equation*}
\xymatrix{
H_1(H)\ar^-{\widetilde\iota}[r] & H_1(U)\ar[r] & H_1(E_\M {G_K}/U)\ar[r] & 0.
}
\end{equation*}
We use the Universal Coefficient Theorem for cohomology (cf.~\cite[\S 3.1]{hatcher}), along with torsion-freeness of $0$th homology groups, to write the $\mathrm{hom}$-dual sequence as
\[
\xymatrix{
0\ar[r] & H^1(E_\M {G_K}/U)\ar[r] &  H^1(U)\ar^-{\widetilde\iota^*}[r] & H^1(H)\ar[r] & 0.
}
\]
Note that the identification $H^1(\slot)\to\mathrm{hom}(H_1(\slot),\Z)$ is natural, cf.~\cite[page 196]{hatcher}.

The two previous sequences and the degree $1$ subsequence of \eqref{eq:poitou-tate U} fit into a diagram with exact rows
\begin{equation*}
\xymatrix@R-10pt@C-4pt{
&&& H_1\hskip-0.7pt(\hskip-0.7pt H\hskip-0.7pt)\ar^-{\widetilde\iota}[r]\ar@{=}[d] & H_1\hskip-0.7pt(\hskip-0.7pt U\hskip-0.7pt)\ar@{=}[d]\ar[r] & H_1\!\hskip-1pt\left(\hskip-1.5pt \frac{E_\M {G_K}}{U}\hskip-1.2pt\right)\ar[r] & 0\\
&& H^1\hskip-0.7pt(\hskip-0.7pt U\hskip-0.7pt)\ar@{=}[d]\ar[r] & H_1\hskip-0.7pt(\hskip-0.7pt H\hskip-0.7pt)\ar^-{\alpha}[r] & H_1\hskip-0.7pt(\hskip-0.7pt U\hskip-0.7pt) &&\\
0\ar[r] & H^1\!\hskip-1pt\left(\hskip-1.5pt\frac{E_\M {G_K}}{U}\hskip-1.2pt\right)\ar[r] & H^1\hskip-0.7pt(\hskip-0.7pt U\hskip-0.7pt)\ar^-{\widetilde\iota^*}[r] & H^1\hskip-0.7pt(\hskip-0.7pt H\hskip-0.7pt)\ar^-{\cong}_-{[T_U]\smallfrown\slot}[u] &&&
}
\end{equation*}
in which $[T_U]\smallfrown\slot:H^1(H)\to H_1(H)$ is the Poincar\'e duality isomorphism (cf.~\cite[Theorem 3.30]{hatcher}).
Since the longitude $l\in H$ is null-homologous in $BU$ by hypothesis, $\widetilde\iota$ coincides with the map induced in homology by the inclusion $T_U\hookrightarrow X_U$, that is with $\alpha$. Hence the upper square commutes. Now the  naturality of the maps $H^1(\slot)\to\mathrm{hom}(H_1(\slot),\Z)$ allows to identify $\widetilde\iota^*$ with the map induced on cohomology by the inclusion $T_U\hookrightarrow X_U$. So the commutativity of the lower square follows from the commutativity of the diagram in the proof of Theorem 3.43 in \cite{hatcher}. This establishes Proposition D.


\subsection{The proofs of Theorems E and E$\sharp$}
An arbitrary nontrivial knot group $G_K$ acts cocompactly on the finite-dimensional CW-complexes $\bigsqcup_{G_K/H}\R^2$, $EG_K$ and $\bigsqcup_{G_K/H}Cyl(\pi\colon\R^2\to\R)$, or respectively $G_K\times_{\gen{a,l_1,\dots,l_r}}\R\times \mathcal{C}(F_r)$, $EG_K$ and $G_K\times_{\gen{a,l_1,\dots,l_r}}(Cyl(\pi\colon\R\times \mathcal{C}(F_r)\to\mathcal{C}(F_r))$, appearing in the pushout constructions in Sections \ref{ssec:prime knots} and \ref{ssec:non-prime knots}. Hence the model of $E_\M G_K$ provided by those pushouts is also a cocompact finite-dimensional $G_K$-CW-complex. This is a sufficient condition for $G_K$ to be $\M-FP$ (cf.~\cite[Lemma 3.17]{fluch}, \cite[Theorem 0.1]{luemei}, \cite[\S 3.6]{stjohngreen}).

Now it remains to show that each $M\in\M$ satisfies \eqref{eq:bredon duality}. There are three cases to consider (cf.~\S \ref{ssec:preliminaries}).
\begin{enumerate}
\item If $M=\{1\}$, then $WM=G_K$ and
\[
H^i(G_K,\Z G_K)=
\begin{cases}
\Z^\infty & i=2\\
0 & i\neq 2
\end{cases}
\]
where $\Z^\infty$ means a free abelian group on (countably) infinitely many generators (cf.~\cite{bieck}, in particular Theorem 6.2 and the subsequent discussion). This also shows that $G_K$ cannot be a Bredon Poincar\'e duality group.
\item If $M=\gen{g^{-1}ag}$ for $a$ a generator of $G_K$ and $g\in G_K$, that is if $M$ is maximal in $\M$, then $WM$ is the free group $F_r$, $1\leq r<\infty$, on the classes modulo $M$ of the relevant longitudes. More specifically, if the knot $K$ is prime, then $r=1$ and $WM\cong Z$ is generated by $lM$, where $l$ is the longitude of $K$ corresponding to the meridian $g^{-1}ag$ (cf.~Lemma \ref{lem:non-cable normalizer} and \cite{simon1976}). If $K$ is composite, then $r>1$ and $WM$ is generated by the classes $l_1M,\dots,l_rM$ of the suitable longitudes of the prime factors of $K$ (cf.~\S \ref{ssec:non-prime knots} and \cite{noga1967}). Free groups are duality groups of dimension $1$, and more precisely
\begin{equation}\label{eq:WM free}
H^\bullet(WM,\Z[WM])=H^1(WM,\Z[WM])=\begin{cases}
                                      \Z & r=1\\
                                      \Z^\infty & r>1
                                      \end{cases}
\end{equation}
(cf.~\cite[\S VIII.10, Example 2 and Exercise 2]{brown}). This completes the proof of the case, But we record a consequence of \eqref{eq:WM free} that provides a nice way to prove the remaining case. Namely, $WM$ acts on $\mathcal{C}(F_r)$, the geometric realization of the Cayley graph of $F_r$ with respect to the generating set made of the generators of $F_r$ and their inverses. This action is free, $\mathcal{C}(F_r)$ is contractible and $\mathcal{C}(F_r)/WM$ has finitely many cells, so the cohomology with compact supports of $\mathcal{C}(F_r)$ is isomorphic to $H^\bullet(WM,\Z WM)$:
\begin{equation}\label{eq:H_c(Cay)}
H_c^\bullet(\mathcal{C}(F_r),\Z)=H_c^1(\mathcal{C}(F_r),\Z)=\begin{cases}
                                      \Z & r=1\\
                                      \Z^\infty & r>1
                                      \end{cases}
\end{equation}
(cf.~\cite[Chapter VIII, Proposition 7.5]{brown}).
\item If $M=\gen{g^{-1}a^kg}$ for $a$ a generator of $G_K$, $g\in G_K$ and $k>1$, then $WM=\Z/k\Z\times F_r$. This $WM$ also acts on $\mathcal{C}(F_r)$: one just extends the action of $F_r$ letting elements of $\Z/k\Z$ act as the identity. This action, although not free, has finite isotropy groups, so $H^\bullet(WM,\Z WM)$ is still isomorphic to the cohomology with compact supports of $\mathcal{C}(F_r)$ (cf.~\cite[\S VIII.7, Exercise 4]{brown}). But then, through \eqref{eq:H_c(Cay)}, equation \eqref{eq:WM free} holds equally well also in this case.
\end{enumerate}
This completes the proof of Theorem E.

Now we turn to Theorem E$\sharp$. Let $U\id G_K$ be a normal subgroup of finite index. Then all the $G_K$-CW-complexes appearing in the pushout constructions in Sections \ref{ssec:prime knots} and \ref{ssec:non-prime knots} have a natural $U$-action, obtained restricting the natural $G_K$-action. Since $G_K$ acts cocompactly on each of these $G_K$-CW-complexes and $U$ has finite index, the $U$-action is cocompact as well. Since the isotropy groups of the $U$-action on $E_\M G_K$ are precisely the subgroups in $\M_U$, $U$ is $\M_U-FP$. Moreover, $EG_K/U$ is a finite covering space of the compact manifold $BG_K$, so it is compact as well. Then by \cite[Chapter VIII, Proposition 7.5]{brown}
\[
H^i(U,\Z U)\cong H^i_c(EG_K, \Z)\cong H^i(G_K,\Z G_K)=
\begin{cases}
\Z^\infty & i=2\\
0 & i\neq 2
\end{cases}.
\]
Again, this shows that $U$ cannot be a Bredon Poincar\'e duality group.
Finally, if $M\in\M_U\setminus\{\{1\}\}$, then $W_U(M)=N_U(M)/M=(N_{G_K}(M)\cap U)/M$ is a finite-index subgroup of $N_{G_K}(M)/M=W_{G_K}(M)$, so the argument leading to equation \eqref{eq:WM free} applies again.
\section*{Acknowledgments}
Parts of the results in this article are contained in my PhD thesis. My deepest gratitude goes to  my PhD supervisor, Thomas Weigel, and to my --alas-- unofficial co-supervisor, Brita Nucinkis, for their guidance and exceptional support. Warm thanks to Iain Moffatt for his kind advice and feedback, from which this research benefited in its early stages, and to Ged Corob Cook and Victor Moreno for interesting discussions on classifying spaces.

This research was partially supported by the Italian National Group for Algebraic and Geometric Structures and Their Applications (GNSAGA -- INDAM).
\bibliographystyle{amsplain}
\bibliography{ambclassp}

\end{document}